\theoremstyle{plain}
\newcommand{\newreptheorem}[2]{\newtheorem*{rep@#1}{\rep@title}\newenvironment{rep#1}[1]{\def\rep@title{#2 \ref*{##1}}\begin{rep@#1}}{\end{rep@#1}}}
\newtheorem{theorem}{Theorem}
\newtheorem*{theorem-non}{Theorem}
\newtheorem*{non-lemma}{Lemma}
\newtheorem{lemma}[theorem]{Lemma}
\newtheorem{corr}[theorem]{Corollary}
\theoremstyle{definition}
\newtheorem{remark}[theorem]{Remark}
\DeclareMathOperator{\Tr}{Tr}
\begin{document}

\title{The homology torsion growth of determinantal hypertrees}
\author{Andr\'as M\'esz\'aros}
\date{}
\affil{HUN-REN Alfr\'ed R\'enyi Institute of Mathematics, Budapest, Hungary}
\maketitle
\begin{abstract}
Fix a dimension $d\ge 2$, and let $T_n$ be a random $d$-dimensional determinantal hypertree on $n$ vertices. We prove that 
\[\frac{\log|H_{d-1}(T_n,\mathbb{Z})|}{{{n\choose {d}}}}\]
converges in probability to a constant $c_d$, which satisfies
 \[\frac{1}2 \log\left(\frac{d+1}e\right)\le c_d\le \frac{1}2 \log\left(d+1\right) .\]

\end{abstract}

\section{Introduction}

\emph{Determinantal hypertrees} are natural higher dimensional generalizations of a uniform random spanning tree of a complete graph. Fix a dimension $d\ge 2$. A $d$-dimensional simplicial complex $S$ on the vertex set $[n]=\{1,2,\dots,n\}$ is called a $d$-dimensional hypertree, if
\begin{enumerate}[\hspace{30pt}(a)]
 \item\label{pra} $S$ has complete $d-1$-skeleton;
 \item\label{prb} The number of $d$-dimensional faces of $S$ is ${n-1}\choose{d}$;
 \item\label{prc} The homology group $H_{d-1}(S,\mathbb{Z})$ is finite.
\end{enumerate}

In one dimension, a spanning tree must be connected, property \eqref{prc} above is the higher dimensional analogue of this requirement. Among the complexes satisfying both \eqref{pra} and~\eqref{prc}, hypertrees have the minimum number of $d$-dimensional faces. For a graph~$G$, if the reduced homology group $\tilde{H}_0(G,\mathbb{Z})$ is finite, then it is trivial. This statement fails in higher dimensions since for a hypertree $S$, the order of $H_{d-1}(S,\mathbb{Z})$ can range from $1$ to $\exp(\Theta(n^d))$, see \cite{kalai1983enumeration}. Thus, while the homology of spanning trees is uninteresting, the homology of $d$-dimensional hypertrees is a very rich subject to study.

Kalai's generalization of Cayley's formula \cite{kalai1983enumeration} states that
\begin{equation}\label{kalaiformula}\sum |H_{d-1}(S,\mathbb{Z})|^2=n^{{n-2}\choose {d}},\end{equation}
where the summation is over all the $d$-dimensional hypertrees $S$ on the vertex set $[n]$. See also \cite{duval2009simplicial} for various generalizations of this formula. Kalai's formula suggests that the natural probability measure on the set of hypertrees is the one where the probability assigned to a hypertree $S$ is \begin{equation}\label{measuredef}
 \frac{|H_{d-1}(S,\mathbb{Z})|^2}{n^{{n-2}\choose {d}}}.
\end{equation}
It turns out that this measure is a determinantal probability measure \cite{lyons2003determinantal,hough2006determinantal}. Thus, a random hypertree $T_n$ distributed according to \eqref{measuredef} is called a determinantal hypertree. General random determinantal complexes were investigated by Lyons \cite{lyons2009random}. 

We discuss discrete determinantal measures in more detail in Sections~\ref{secgendet}~and~\ref{secdethyp}. Here we only provide a quick description of $T_n$ as a determinantal measure.  Let $P_n$ be the orthogonal projection to the $d$-dimensional coboundaries of the simplex on the vertex set $[n]$. Note that the rows and columns of $P_n$ are indexed by ${{[n]}\choose{d+1}}$. Then, for any ${{n-1}\choose {d}}$ element subset $A$ of~${{[n]}\choose{d+1}}$, we have
\[\mathbb{P}(T_n(d)=A)=\det (P_n[A]),\]
where $T_n(d)$ denotes the set of $d$-dimensional faces of $T_n$, and $P_n[A]$ is the submatrix of $P_n$ determined by the rows and columns with index in $A$. See Lemma~\ref{lemmahypertreeisdet} for details, including a more explicit description of $P_n$.  

While uniform random spanning trees are well-studied \cite{ald1,ald2,ald3,grimmett1980random,szekeres2006distribution,lyons2017probability}, a theory of determinantal hypertrees started to emerge only recently. For $d=2$, Kahle and Newman~\cite{kahle2022topology} proved that with high probability, $T_n$ is apsherical, that is, it has a contractible universal cover. Moreover, with high probability the fundamental group $\pi_1(T_n)$ is hyperbolic and has cohomological dimension $2$. Vander Werf~\cite{werf2022determinantal} provided a useful description of the links of determinantal hypertrees, which makes it possible to apply local-to-global criteria to investigate the expansion properties of determinantal hypertrees. Relying on these criteria, Vander Werf~\cite{werf2022determinantal} proved that for $d=2$ and any $\delta>0$, if we take the union of $\delta\log(n)$ independent copies of~$T_n$, then the fundamental group of the resulting simplicial complex has property~(T) with high probability. Also using local-to-global criteria, the author~\cite{meszaros2023coboundary} proved that given any dimension $d$, there is a constant $k_d$ such that if we take the union of $k_d$ independent copies of~$T_n$, then the resulting simplicial complex is a coboundary expander with high probability. Linial and Peled~\cite{linial2019enumeration} provided bounds on the number of hypertrees. The author determined the local weak limit of determinantal hypertrees~\cite{meszaros2022local}, and for $d=2$, he proved both upper~\cite{meszaros2025bounds} and lower~\cite{meszaros20242} bounds on $H_1(T_n,\mathbb{F}_2)$. See also \cite{meszaros2023cohen,lee2025distribution} for results on random matrix models inspired by determinantal hypertrees. 

In this paper, we investigate the growth rate of $|H_{d-1}(T_n,\mathbb{Z})|$ and prove the following theorem.

\begin{theorem}\label{thm1}
 There is a constant $c_d$ such that
 \[\lim_{n\to\infty} \frac{\log |H_{d-1}(T_n,\mathbb{Z})|}{{{n}\choose{d}}}=c_d\]
 in probability.

 Moreover,
 \[\frac{1}2 \log\left(\frac{d+1}e\right)\le c_d\le \frac{1}2 \log\left(d+1\right) .\]
\end{theorem}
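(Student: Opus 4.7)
My plan is to establish the two bounds on $c_d$ first by essentially deterministic or entropic arguments, and then to promote the resulting control on $\mathbb{E}[\log|H_{d-1}(T_n;\mathbb{Z})|]$ to convergence in probability via a concentration bound. The \textbf{upper bound} is deterministic: since $T_n$ is a hypertree, $\partial_d$ injects $C_d(T_n;\mathbb{Z})$ into $Z_{d-1}(K_n^{(d-1)};\mathbb{Z})$ as lattices of the same rank $\binom{n-1}{d}$, and $|H_{d-1}(T_n;\mathbb{Z})|$ is the index, equal to $\operatorname{covol}(\Image\partial_d)/\operatorname{covol}(Z_{d-1}(K_n^{(d-1)};\mathbb{Z}))$ measured in the ambient Euclidean metric on $C_{d-1}(K_n;\mathbb{R})$. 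Each column of $\partial_d$ is a $\{0,\pm 1\}$-vector with exactly $d+1$ non-zero entries, so Hadamard's inequality gives $\operatorname{covol}(\Image\partial_d)^2=\det(\partial_d^T\partial_d)\le(d+1)^{\binom{n-1}{d}}$. Since $Z_{d-1}(K_n^{(d-1)};\mathbb{Z})$ is a pure sublattice of $\mathbb{Z}^{\binom{n}{d}}$, its covolume is at least $1$ (the gcd of the maximal minors of an integer basis matrix is $1$ by purity). Hence $|H_{d-1}(T_n;\mathbb{Z})|\le(d+1)^{\binom{n-1}{d}/2}$ uniformly in $T_n$, and the upper bound on $c_d$ follows on dividing by $\binom{n}{d}$ and using $\binom{n-1}{d}/\binom{n}{d}\to 1$.

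For the \textbf{lower bound}, I would use the Shannon entropy of the determinantal measure $\mu(T)=|H_{d-1}(T;\mathbb{Z})|^2/n^{\binom{n-2}{d}}$, which equals
\[
 \mathrm{H}(\mu)=\binom{n-2}{d}\log n-2\,\mathbb{E}[\log|H_{d-1}(T_n;\mathbb{Z})|]\le\log N_n,
\]
where $N_n$ is the total number of $d$-dimensional hypertrees on $[n]$. A crude counting $N_n\le\binom{\binom{n}{d+1}}{\binom{n-1}{d}}\le\bigl(\tfrac{ne}{d+1}\bigr)^{\binom{n-1}{d}}$ (or the sharper bound of \cite{linial2019enumeration}) then gives, after using $\binom{n-1}{d}-\binom{n-2}{d}=\binom{n-2}{d-1}=O(n^{d-1})$ to absorb the lower-order mismatch between $\binom{n-1}{d}\log n$ and $\binom{n-2}{d}\log n$,
\[
 \frac{\mathbb{E}[\log|H_{d-1}(T_n;\mathbb{Z})|]}{\binom{n}{d}}\ge\tfrac12\log\tfrac{d+1}{e}+o(1).
\]

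To prove \textbf{convergence in probability} to a single constant $c_d$ I need two further ingredients. (i) A variance bound $\operatorname{Var}(\log|H_{d-1}(T_n;\mathbb{Z})|)=o\!\bigl(\binom{n}{d}^2\bigr)$, which I would derive by an Efron--Stein / martingale argument exploiting the negative association of determinantal probability measures, together with a rank-one-update estimate controlling the change of $\log\det(\partial_d^T\partial_d)$ when a single top face is resampled. (ii) Convergence of the means $\mathbb{E}[\log|H_{d-1}(T_n;\mathbb{Z})|]/\binom{n}{d}$, which I would attack via a L\"uck-style approximation: combining the explicit local weak limit of $T_n$ identified in \cite{meszaros2022local} with the spectral representation $\log|H_{d-1}(T_n;\mathbb{Z})|=\tfrac12\sum_i\log\lambda_i$ over non-zero eigenvalues of the up-Laplacian should yield convergence to a spectral invariant of the local limit. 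Together with the uniform $L^\infty$ bound from the upper-bound step (which automatically upgrades mean convergence to $L^1$ convergence), these two statements force convergence in probability.

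The step I expect to be the \textbf{main obstacle} is (ii): a L\"uck-type theorem for torsion along Benjamini--Schramm convergent sequences of random simplicial complexes requires uniform integrability of $\log\lambda$ against the empirical spectral measure of the up-Laplacian of $T_n$, and hence a quantitative estimate ruling out the accumulation of non-negligible mass at very small non-zero eigenvalues. This is delicate because no deterministic positive spectral gap is available for a single copy of $T_n$; I would try to bridge the gap by exploiting the determinantal structure of the top-face measure together with the coboundary-expansion and spectral-gap machinery developed in \cite{werf2022determinantal} and \cite{meszaros2023coboundary}, which at present apply to multiple independent copies of $T_n$ and would need to be sharpened here.
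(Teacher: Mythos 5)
Your two bounds on $c_d$ are essentially the paper's: the upper bound is Kalai's Hadamard-type estimate (Lemma~\ref{cdupper}), and your entropy inequality for the measure $|H_{d-1}(T)|^2/n^{\binom{n-2}{d}}$ is just a repackaging of the paper's first-moment counting argument (Lemma~\ref{cdlower}), with the same crude bound on the number of hypertrees. These parts are correct. (One small remark: the paper's counting argument directly bounds $\mathbb{P}(|H_{d-1}(T_n)|\le c^{\binom{n}{d}})$ and so gives a lower bound in probability without any reference to the mean; your entropy version only controls $\mathbb{E}\log|H_{d-1}|$ and therefore genuinely needs the convergence step to conclude.)

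The genuine gap is exactly the point you flag as the ``main obstacle'' and then do not resolve: controlling the empirical spectral measure of the up-Laplacian near $0$, i.e.\ the uniform integrability of $\log t$ at the lower end. The route you propose --- sharpening the spectral-gap / coboundary-expansion results of \cite{werf2022determinantal} and \cite{meszaros2023coboundary} so that they apply to a single copy of $T_n$ --- is not what is needed and is unlikely to work: a single determinantal hypertree is not expected to have a uniform spectral gap, and indeed the paper never establishes one. Instead, the paper proves a new moment estimate for general discrete determinantal processes (Lemma~\ref{gendetspec}): for the determinantal process $X$ attached to a rank-$r$ projection $P$ on an $m$-set, $\mathbb{E}\,\sigma_k(\lambda_{X,1}^{-1},\dots,\lambda_{X,r}^{-1})\le\binom{m-r+k}{k}\binom{r}{k}$, proved by expanding $\sigma_k$ of the inverse eigenvalues as a sum of principal minors and double counting. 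Via Markov's inequality this bounds the product of the $k$ smallest eigenvalues of $P_n[T_n(d)]$ simultaneously for all $k$ (Lemma~\ref{nearzero}), which is precisely the quantitative input that makes $\int\mathbbm{1}(0<t\le 1)\log t\,d\mu_n$ converge; the absence of an atom of $\mu$ at $0$ then comes from Linial--Peled. This idea is absent from your proposal. Secondarily, your variance step (i) is also shakier than you suggest: resampling a single top face of a hypertree generally destroys the hypertree property, and $\log\det$ of the truncated Laplacian is not Lipschitz in the symmetric difference of face sets, so the Efron--Stein/negative-association bound is not routine. The paper sidesteps variance entirely by coupling the $T_n$ so that conditions (C1)--(C2) hold almost surely and then proving a purely deterministic limit theorem (Lemma~\ref{fornice}) valid for every such ``nice'' sequence, with a limit depending only on $d$.
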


There are strong results on the growth of rational Betti numbers for random complexes and also in the more algebraic setting of towers of finitely sheeted coverings of complexes \cite{linial2016phase,linial2019enumeration,luck1994approximating,farber1998geometry,thom2008sofic,abert2013benjamini}. The understanding of the normalized mod $p$ Betti numbers or the normalized log of the size of the torsion of the integral homology groups is more limited in both the random and the algebraic settings. Mostly, we can only determine the limit of these quantities when they vanish asymptotically (or in the case of mod $p$ Betti numbers when they are asymptotically equal to rational Betti numbers) \cite{linial2006homological,hoffman2017threshold,luczak2018integral,meszaros2025bounds,aronshtam2015does,newman2018integer,meshulam2009homological,kahle2016inside,as1,as2,as3,as4}. It is believed that the homology groups of the Linial--Meshulam complex have trivial torsion parts away from the critical density \cite{luczak2018integral,kahle2020cohen}. It is also conjectured that the torsion in the homology of an arithmetic group 
grows subexponentially with the covolume except for very specific cases~\cite{bergeron2013asymptotic}. 

In contrast to these vanishing results, Theorem~\ref{thm1} provides a positive growth rate.

\subsection{Outline of the proof of Theorem~\ref{thm1}}

Our proof relies on spectral methods. Let $\partial_{T_n,d}$ be the matrix of the $d$-th boundary map of $T_n$. The random probability measure $\mu_n$ is defined as the empirical measure on the eigenvalues of the Laplacian matrix $\partial_{T_n,d}\partial_{T_n,d}^T$, that is,
\[\mu_n=\frac{1}{{{n}\choose{d}}}\sum_{i=1}^{{n}\choose{d}} \delta_{\lambda_i},\]where $\delta_x$ is the Dirac-measure on $x$, and $\lambda_1,\dots,\lambda_{{{n}\choose{d}}}$ are the eigenvalues of $\partial_{T_n,d}\partial_{T_n,d}^T$.

The measure $\mu_n$ is related to $H_{d-1}(T_n,\mathbb{Z})$ by the formula
\begin{equation}\label{intHvsmu}
 \frac{\log|H_{d-1}(T_n,\mathbb{Z})|}{{{n}\choose{d}}}=\frac{1}2 \int \mathbbm{1}(t>0)\log(t) d\mu_n(t)+o(1),
\end{equation}
see Lemma~\ref{Hvslog}. 

The author proved that $T_n$ has a local weak limit \cite{meszaros2022local}. Relying on the techniques of Linial and Peled~\cite{linial2016phase}, this implies that there is a (deterministic) limiting measure~$\mu$ such that
\begin{equation}\label{limitexchange}
 \lim_{n\to\infty}\int f(t) d\mu_n(t)=\int f(t) d\mu(t)
\end{equation}
in probability for all bounded continuous functions $f(t)$. By \eqref{intHvsmu}, it is enough to prove that~\eqref{limitexchange} is also true for $\mathbbm{1}(t>0)\log(t)$ in place of $f(t)$. Proving this would also give the formula 
\[c_d=\frac{1}2\int \mathbbm{1}(t>0)\log(t) d\mu(t)\]
for the limiting constant $c_d$.
 However, $\mathbbm{1}(t>0)\log(t)$ is not bounded. Thus, besides \eqref{limitexchange}, we also need to have good control over the behavior of $\mu_n$ both at $0$ and at $+\infty$. Out of these two, the more challenging problem is to understand the measure $\mu_n$ near~$0$. We control the behavior of $\mu_n$ near $0$ by proving a new general result on the spectrum of certain random matrices associated to discrete determinantal measures, see Section~\ref{secgendet}.

See also ~\cite{lyons2005asymptotic,lyons2010identities,csikvari2016matchings} for slightly different settings where the limiting behavior of the integral of the logarithm function against a sequence of measure was understood. 

\bigskip

\textbf{Acknowledgments.} The author was supported by the NKKP-STARTING 150955 project and the Marie Sk\l{}odowska-Curie Postdoctoral Fellowship "RaCoCoLe". The author is grateful to Mikl\'os Ab\'ert and the anonymous referee for their comments.

\section{Probabilistic results}
\subsection{A few facts from linear algebra}

Given $0\le k\le m$, let us define the elementary symmetric polynomial of degree $k$ in $m$ variables the usual way, that is,
\[\sigma_k(x_1,x_2,\dots,x_m)=\sum_{\substack{A\subset [m]\\|A|=k}}\quad \prod_{i\in A}x_i.\]

Let $M$ be an $m\times m$ matrix. Let $\lambda_1,\lambda_2,\dots,\lambda_m$ be the eigenvalues of $M$.

Assume that the rows and columns of $M$ are both indexed by the set $H$. For $A\subset H$, let $M[A]$ be the square submatrix of $M$ determined by the rows and columns with index in $A$. We use the convention that $\det( M[\emptyset])=1$. For the proof of the following well-known lemma, see, for example, \cite[Theorem 1.2.16]{horn2012matrix}.

\begin{lemma}\label{linearalg}
For any $0\le k\le m$, we have
\[\sigma_k(\lambda_1,\lambda_2,\dots,\lambda_m)=\sum_{\substack{A\subset H\\|A|=k}} \det (M[A]).\]
\end{lemma}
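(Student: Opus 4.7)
The plan is to derive the identity by comparing two different expansions of the characteristic polynomial $p(x) = \det(xI - M)$.

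First, since $\lambda_1, \dots, \lambda_m$ are the roots of $p(x)$ counted with algebraic multiplicity, I would write
\[p(x) = \prod_{i=1}^m (x - \lambda_i) = \sum_{k=0}^m (-1)^k \sigma_k(\lambda_1, \dots, \lambda_m)\, x^{m-k}.\]
This is just the definition of the elementary symmetric polynomials as the signed coefficients of a polynomial in terms of its roots.

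Second, I would expand $\det(xI - M)$ directly using the multilinearity of the determinant in the rows. Each row $i$ of $xI - M$ splits as $x e_i - m_i$, where $e_i$ is the $i$-th standard basis vector and $m_i$ is the $i$-th row of $M$. Distributing, I obtain a sum over all subsets $A \subseteq H$: for $i \in A$ we pick the $-m_i$ part, and for $i \notin A$ we pick the $x e_i$ part. Pulling out $x$ once for each index not in $A$ and $-1$ once for each index in $A$, the contribution of $A$ is $(-1)^{|A|} x^{m-|A|}$ times the determinant of a matrix whose rows outside $A$ are standard basis vectors supported on the diagonal. Performing iterated Laplace expansion along these trivial rows collapses the determinant to $\det(M[A])$, yielding
\[\det(xI - M) = \sum_{k=0}^m (-1)^k x^{m-k} \sum_{\substack{A \subseteq H \\ |A|=k}} \det(M[A]).\]

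Finally, comparing the coefficients of $x^{m-k}$ in the two expressions for $p(x)$ gives exactly the claimed formula. There is no real obstacle; the only step that requires a modicum of care is the bookkeeping in the multilinear expansion and the justification that the auxiliary determinant in each term reduces to the principal minor $\det(M[A])$, which follows from standard cofactor expansion.
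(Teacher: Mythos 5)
The paper states this lemma without proof, as a standard fact from linear algebra. Your argument --- comparing the coefficient of $x^{m-k}$ in $\det(xI-M)=\prod_i(x-\lambda_i)$ with the one obtained from the multilinear expansion of the determinant into principal minors --- is the standard proof and is correct.
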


\begin{corr}\label{sigmainv}
 Assume that $\det (M)\neq 0$. For any $0\le k\le m$, we have
 \[\sigma_k\left(\lambda_1^{-1},\lambda_2^{-1},\dots,\lambda_m^{-1}\right)=\sum_{\substack{A\subset H\\|A|=m-k}} \frac{\det (M[A])}{\det(M)}.\]
\end{corr}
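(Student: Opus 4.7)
The plan is to deduce the corollary by applying Lemma~\ref{linearalg} to the inverse matrix $M^{-1}$ and then relating the principal minors of $M^{-1}$ to those of $M$ via the classical Jacobi (complementary minor) identity.

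First I would observe that, since $M$ is invertible, the eigenvalues of $M^{-1}$ are exactly $\lambda_1^{-1}, \ldots, \lambda_m^{-1}$. The rows and columns of $M^{-1}$ are also indexed by $H$, so Lemma~\ref{linearalg} applied to $M^{-1}$ immediately gives
\[\sigma_k\!\left(\lambda_1^{-1},\ldots,\lambda_m^{-1}\right)=\sum_{\substack{A\subset H\\|A|=k}}\det\!\left(M^{-1}[A]\right).\]

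Next I would invoke Jacobi's identity for principal minors of an inverse, which asserts that whenever $\det(M)\neq 0$ and $A\subset H$,
\[\det\!\left(M^{-1}[A]\right)=\frac{\det\!\left(M[H\setminus A]\right)}{\det(M)}.\]
This is standard linear algebra; it can be proved, e.g., by writing $M$ in block form according to the partition $H=A\sqcup(H\setminus A)$ and inverting the block matrix, or equivalently by using the cofactor/adjugate formulation and expanding. Substituting and reindexing the sum via $B=H\setminus A$ (so that $|B|=m-k$) yields exactly the claimed identity.

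I do not anticipate a real obstacle: everything reduces to Lemma~\ref{linearalg} plus the Jacobi complementary-minor identity, both of which are well-known. The only step deserving a line of justification in the write-up is the Jacobi identity itself, since it is the non-trivial input beyond what Lemma~\ref{linearalg} already supplies.
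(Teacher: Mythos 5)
Your proof is correct, but it takes a different route from the paper. The paper never touches $M^{-1}$: it applies Lemma~\ref{linearalg} once, to $M$ itself with index $m-k$, and combines it with the elementary identity
\[
\sigma_k\left(\lambda_1^{-1},\dots,\lambda_m^{-1}\right)=\frac{\sigma_{m-k}(\lambda_1,\dots,\lambda_m)}{\lambda_1\lambda_2\cdots\lambda_m},
\]
together with $\lambda_1\cdots\lambda_m=\det(M)$. That argument is purely about symmetric polynomials and needs no linear-algebraic input beyond Lemma~\ref{linearalg}. You instead apply Lemma~\ref{linearalg} to $M^{-1}$ and then import Jacobi's complementary-minor identity $\det\left(M^{-1}[A]\right)=\det\left(M[H\setminus A]\right)/\det(M)$, which is a genuinely stronger statement (it matches minors termwise, not just after summing over all $A$ of a given size) and, as you note, is the one step requiring separate justification. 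Both proofs are valid; the paper's is shorter and self-contained, while yours buys the stronger termwise identity at the cost of invoking a classical but nontrivial fact. If you keep your version, do include the block-matrix (Schur complement) justification of Jacobi's identity rather than leaving it as a citation-free assertion.
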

\begin{proof}
 The statement follows by combining Lemma~\ref{linearalg} with the identity that
 \[\sigma_k\left(\lambda_1^{-1},\lambda_2^{-1},\dots,\lambda_m^{-1}\right)=\frac{\sigma_{m-k}(\lambda_1,\lambda_2,\dots,\lambda_m)}{\lambda_1\lambda_2\cdots\lambda_m}.\qedhere\]
\end{proof}

\begin{corr}\label{projdetsum}
Assume that $M$ is a projection matrix of rank $r$. Then for any $0\le k\le r$, we have
\[\sum_{\substack{A\subset H\\|A|=k}} \det (M[A])={{r}\choose{k}}.\]
\end{corr}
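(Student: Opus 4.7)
The plan is to reduce this immediately to Lemma~\ref{linearalg}, which expresses $\sigma_k(\lambda_1,\dots,\lambda_m)$ as $\sum_{|A|=k}\det(M[A])$. So I need to compute $\sigma_k(\lambda_1,\dots,\lambda_m)$ in the special case where $M$ is a projection matrix of rank $r$.

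First I would recall that a projection matrix satisfies $M^2=M$, so every eigenvalue of $M$ is a root of $t^2-t$, i.e.\ lies in $\{0,1\}$. Since $M$ is diagonalizable (the minimal polynomial $t(t-1)$ has distinct roots) and has rank $r$, exactly $r$ of the eigenvalues $\lambda_1,\dots,\lambda_m$ equal $1$, and the remaining $m-r$ equal $0$.

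Next, I would evaluate $\sigma_k$ on this eigenvalue multiset: a monomial $\prod_{i\in A}\lambda_i$ with $|A|=k$ contributes $1$ when $A$ is contained in the set of indices where $\lambda_i=1$, and $0$ otherwise. The number of such $A$ is $\binom{r}{k}$ (which also correctly gives $0$ if $k>r$, though the statement restricts to $k\le r$). Combining this with Lemma~\ref{linearalg} yields the claimed identity.

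There is no real obstacle here: the corollary is essentially a reading of Lemma~\ref{linearalg} at the specific eigenvalue spectrum of an idempotent. The only thing to be careful about is invoking diagonalizability to conclude that the multiplicity of $1$ as an eigenvalue is exactly $r=\rang(M)$, rather than just at most~$r$.
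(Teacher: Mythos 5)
Your proof is correct and follows exactly the paper's route: identify the spectrum of a rank-$r$ projection as $r$ ones and $m-r$ zeros, then read off $\sigma_k=\binom{r}{k}$ via Lemma~\ref{linearalg}. The paper simply asserts the eigenvalue multiplicities without the justification via the minimal polynomial, so your version is the same argument with one step made explicit.
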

\begin{proof}
 The eigenvalues of $M$ are $1$ with multiplicity $r$, and $0$ with multiplicity $m-r$. Thus, by Lemma~\ref{linearalg}, we see that
 \[\sum_{\substack{A\subset H\\|A|=k}} \det (M[A])=\sigma_k(\underbrace{0,0,\dots,0}_{m-r\text{ times}},\underbrace{1,1,\dots,1}_{r\text{ times}})={{r}\choose{k}}.\qedhere\]
\end{proof}
The proof of the following well-known lemma can be found in \cite[Theorem 1.3.22.]{horn2012matrix}.
\begin{lemma}\label{switch}
 Let $A$ be any matrix. Then the non-zero eigenvalues of $AA^T$ (with multiplicities) are the same as the non-zero eigenvalues of $A^TA$ (with multiplicities). 
\end{lemma}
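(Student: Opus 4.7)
The plan is to prove the claim via an eigenspace bijection argument. For any $\lambda \neq 0$, I will exhibit mutually inverse (up to scaling) linear maps between the $\lambda$-eigenspace of $AA^T$ and the $\lambda$-eigenspace of $A^TA$, which will give equality of geometric multiplicities; equality of algebraic multiplicities is then automatic because $AA^T$ and $A^TA$ are real symmetric, hence diagonalizable.

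Concretely, I would first check that the map $\phi_\lambda : v \mapsto A^T v$ sends the $\lambda$-eigenspace of $AA^T$ into the $\lambda$-eigenspace of $A^T A$: if $AA^T v = \lambda v$, then $A^T A (A^T v) = A^T (AA^T v) = \lambda (A^T v)$. Next, I would verify injectivity of $\phi_\lambda$ using $\lambda \neq 0$: if $A^T v = 0$ then $AA^T v = 0 = \lambda v$, forcing $v=0$. By exactly the same reasoning with the roles of $A$ and $A^T$ swapped, the map $\psi_\lambda : w \mapsto A w$ injects the $\lambda$-eigenspace of $A^TA$ into that of $AA^T$. Two injections in opposite directions between finite-dimensional spaces force equality of dimensions.

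Finally, since $AA^T$ and $A^TA$ are symmetric, each has an orthogonal eigenbasis, so algebraic multiplicities equal geometric multiplicities, and the equality of non-zero eigenvalue multiplicities follows.

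There is no real obstacle; the only small point requiring care is the use of $\lambda \neq 0$ in proving injectivity, which is precisely why the statement is restricted to the non-zero spectrum. (If one preferred an approach that also handles the complex case uniformly, one could instead invoke the Sylvester identity $\lambda^n \det(\lambda I_m - AB) = \lambda^m \det(\lambda I_n - BA)$ with $B = A^T$, which shows that the characteristic polynomials of $AA^T$ and $A^T A$ agree up to a factor of $\lambda^{|n-m|}$, immediately yielding the claim; but the eigenspace argument above is the most transparent in the present real-symmetric setting.)
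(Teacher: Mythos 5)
Your proof is correct and complete: the eigenspace injections $v\mapsto A^Tv$ and $w\mapsto Aw$ for $\lambda\neq 0$, together with diagonalizability of the symmetric matrices $AA^T$ and $A^TA$, give exactly the claimed equality of multiplicities. The paper states this lemma as a standard linear-algebra fact and offers no proof, so there is nothing to compare against; your argument (or the Sylvester determinant identity you mention) is the standard justification.
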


\subsection{A general statement about discrete determinantal processes}\label{secgendet}

Let $P$ be an orthogonal projection matrix, where the rows and columns are both indexed by the set $H$ of size $m$. Let $r$ be the rank of $P$. Let $X$ be a random $r$-element subset of~$H$ such that for each deterministic $r$-element subset $A$ of $H$, we have
\[\mathbb{P}(X=A)=\det (P[A]).\]
Note that $P[A]$ is positive semidefinite, so $\det (P[A])\ge 0$. If we combine this with Corollary~\ref{projdetsum}, we see that such a random subset $X$ indeed exists. We call $X$ the determinantal process corresponding to $P$. See \cite{lyons2003determinantal,hough2006determinantal} for more information on determinantal processes.

Given an $r$-element subset $A$ of $H$, let $0\le \lambda_{A,1}\le\lambda_{A,2}\le\cdots\le\lambda_{A,r}$ be the eigenvalues of~$P[A]$. Note that if $\mathbb{P}(X=A)=\det (P[A])>0$, then these eigenvalues are all positive.

Although we will not need it, the spectrum of $P[A]$ has a nice probabilistic interpretation. Namely, $|X\cap A|$ has the same distribution as $\sum_{i=1}^r I_i$, where $I_1,\dots,I_r$ are independent Bernoulli random variables such that $\mathbb{E}I_i=\lambda_{A,i}$, see \cite{hough2006determinantal}.

\begin{lemma}\label{gendetspec}
For any $1\le k\le r$, we have
\[\mathbb{E}\sigma_k\left(\lambda_{X,1}^{-1},\lambda_{X,2}^{-1},\dots,\lambda_{X,r}^{-1}\right)\le {{m-r+k}\choose{k}}{{r}\choose{k}}\le \left(\frac{e^2mr}{k^2}\right)^k.\]
\end{lemma}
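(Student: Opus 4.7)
The plan is to express $\sigma_k(\lambda_{X,1}^{-1},\dots,\lambda_{X,r}^{-1})$ using Corollary~\ref{sigmainv}, and then carry out a change in the order of summation that, combined with Corollary~\ref{projdetsum}, gives the desired bound. Concretely, for each $r$-element subset $A$ of $H$ with $\det(P[A])>0$, apply Corollary~\ref{sigmainv} to the matrix $M=P[A]$ (of size $r$): since $B\subset A$ implies that $M[B]=P[B]$, we obtain
\[
\sigma_k\!\left(\lambda_{A,1}^{-1},\dots,\lambda_{A,r}^{-1}\right)
=\sum_{\substack{B\subset A\\|B|=r-k}}\frac{\det(P[B])}{\det(P[A])}.
\]

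Next, I would average with respect to the determinantal law. Multiplying both sides by $\mathbb{P}(X=A)=\det(P[A])$ cancels the denominator, so
\[
\mathbb{E}\,\sigma_k\!\left(\lambda_{X,1}^{-1},\dots,\lambda_{X,r}^{-1}\right)
=\sum_{\substack{A\subset H\\|A|=r,\ \det(P[A])>0}}\ \sum_{\substack{B\subset A\\|B|=r-k}}\det(P[B]).
\]
Since $P$ is an orthogonal projection, each principal minor $\det(P[B])$ is non-negative, so dropping the condition $\det(P[A])>0$ only increases the total. I would then swap the order of summation, observing that each fixed $B$ of size $r-k$ is contained in exactly $\binom{m-(r-k)}{r-(r-k)}=\binom{m-r+k}{k}$ sets $A$ of size $r$. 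This yields
\[
\mathbb{E}\,\sigma_k\!\left(\lambda_{X,1}^{-1},\dots,\lambda_{X,r}^{-1}\right)
\le \binom{m-r+k}{k}\sum_{\substack{B\subset H\\|B|=r-k}}\det(P[B]),
\]
and Corollary~\ref{projdetsum}, applied to the projection $P$ at level $r-k$, identifies the remaining sum as $\binom{r}{r-k}=\binom{r}{k}$. This gives the first inequality.

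For the second inequality I would just estimate the two binomials crudely. Since $k\le r$, we have $m-r+k\le m$, hence $\binom{m-r+k}{k}\le\binom{m}{k}\le (em/k)^k$, and similarly $\binom{r}{k}\le (er/k)^k$; the product is bounded by $(e^2mr/k^2)^k$.

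No real obstacle is anticipated: the only subtle point is that the identity from Corollary~\ref{sigmainv} requires $\det(P[A])>0$, but since $\det(P[B])\ge 0$ for every $B$, enlarging the range of $A$ in the double sum can only weaken the equality to an inequality, which is precisely what the statement asks for.
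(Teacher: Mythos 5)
Your proposal is correct and follows essentially the same route as the paper: apply Corollary~\ref{sigmainv} to $P[A]$, cancel $\det(P[A])$ against the determinantal probability, swap the order of summation (counting $\binom{m-r+k}{k}$ supersets of each $B$), invoke Corollary~\ref{projdetsum}, and finish with the standard bound $\binom{\ell}{k}\le(e\ell/k)^k$. No issues.
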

\begin{proof}
Combining the definition of $X$ with Corollary~\ref{sigmainv}, we obtain that
\begin{align*}
\mathbb{E}\sigma_k\left(\lambda_{X,1}^{-1},\lambda_{X,2}^{-1},\dots,\lambda_{X,r}^{-1}\right)&=\sum_{\substack{A\subset H,\,|A|=r\\\det P[A]>0} } \det (P[A])\sigma_k\left(\lambda_{A,1}^{-1},\lambda_{A,2}^{-1},\dots,\lambda_{A,r}^{-1}\right)\\&=\sum_{\substack{A\subset H,\, |A|=r\\\det P[A]>0 }}\quad\sum_{\substack{B\subset A\\|B|=r-k}} \det (P[B]).
\end{align*}
By a simple double counting argument and Corollary~\ref{projdetsum}, we have
\begin{align*}\sum_{\substack{A\subset H,\, |A|=r\\\det P[A]>0 }}\quad\sum_{\substack{B\subset A\\|B|=r-k}} \det (P[B])&\le \sum_{\substack{B\subset H\\|B|=r-k}} |\{A\subset H\,:\,B\subset A,\, |A|=r\}|\det (P[B])\\&={{m-r+k}\choose {k}}\sum_{\substack{B\subset H\\|B|=r-k}} \det (P[B])\\&={{m-r+k}\choose {k}}{{r}\choose {k}}. 
\end{align*}
Finally, relying on the estimate ${{\ell}\choose{k}}\le \left(\frac{e\ell}{k}\right)^k$, we get that
\[{{m-r+k}\choose {k}}{{r}\choose {k}}\le {{m}\choose {k}}{{r}\choose {k}}\le \left(\frac{e^2mr}{k^2}\right)^k. \qedhere\]

\end{proof}

\subsection{Determinantal hypertrees as determinantal processes}
\label{secdethyp}
Let $\partial_{n,d}$ be the matrix of the $d$th boundary map of the simplex on the vertex set $[n]$. As in the introduction, let $P_n$ be the orthogonal projection to the range of $\partial_{n,d}^T$, that is, the set of $d$-dimensional coboundaries of the simplex on the vertex set $[n]$.
Note that the rows and columns of $P_n$ are indexed by ${{[n]}\choose{d+1}}=\{A\subset [n]\,:\,|A|=d+1\}$.

Given a simplicial complex $C$, let $C(k)$ be the set of $k$-dimensional faces of $C$.

The next lemma was proved in \cite[Lemma 2.7 and Lemma 2.10]{meszaros2022local}.
\begin{lemma}\hfill\label{lemmahypertreeisdet}
\begin{enumerate}[(a)]
\item We have
\[P_n=\frac{1}n \partial_{n,d}^T \partial_{n,d}.\]
\item The rank of $P_n$ is ${{n-1}\choose {d}}$.

\item The determinantal process corresponding to $P_n$ is $T_n(d)$. In other words, for any ${{n-1}\choose {d}}$ element subset $A$ of ${{[n]}\choose{d+1}}$, we have
\[\mathbb{P}(T_n(d)=A)=\det (P_n[A]).\]
\end{enumerate}
\end{lemma}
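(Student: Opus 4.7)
The plan is to handle the two parts separately. For part (a), since $P_n=\tfrac{1}{n}\partial_{n,d}^T\partial_{n,d}$ is symmetric and positive semi-definite by construction, it suffices to show that its non-zero eigenvalues are all equal to $1$. By Lemma~\ref{switch}, the non-zero spectrum of $P_n$ coincides with that of $\tfrac{1}{n}\partial_{n,d}\partial_{n,d}^T$. The key ingredient is then the classical identity
\[\partial_{n,d}\partial_{n,d}^T+\partial_{n,d-1}^T\partial_{n,d-1}=nI\]
on $C_{d-1}$ of the full simplex on $[n]$, which can be verified by a direct entrywise computation (the diagonal entry for a $(d-1)$-face $\sigma$ contributes $n-d$ from $d$-cofaces and $d$ from $(d-2)$-subfaces, and the off-diagonal terms cancel by the combinatorics of the complete simplex). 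Combined with the acyclicity of the simplex (so $\ker\partial_{n,d-1}=\operatorname{im}\partial_{n,d}$, making the two summands have complementary ranges), this forces each summand to have spectrum contained in $\{0,n\}$. Hence the non-zero eigenvalues of $P_n$ are all $1$, and its rank equals $\operatorname{rank}(\partial_{n,d})=\binom{n-1}{d}$.

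For part (b), fix a $\binom{n-1}{d}$-subset $A\subset\binom{[n]}{d+1}$ and let $T$ be the $d$-complex with complete $(d-1)$-skeleton and $T(d)=A$. Let $\partial_A$ denote the submatrix of $\partial_{n,d}$ whose columns are indexed by $A$. If $T$ fails to be a hypertree, then $H_{d-1}(T,\mathbb{Z})$ has positive rank, the columns of $\partial_A$ are linearly dependent, and both sides of the claimed identity $\mathbb{P}(T_n(d)=A)=\det(P_n[A])$ are zero. Assume then that $T$ is a hypertree. The left-hand side equals $|H_{d-1}(T,\mathbb{Z})|^2/n^{\binom{n-2}{d}}$ by~\eqref{measuredef}, and the right-hand side equals $n^{-\binom{n-1}{d}}\det(\partial_A^T\partial_A)$. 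Using Pascal's identity $\binom{n-1}{d}=\binom{n-2}{d-1}+\binom{n-2}{d}$, the claim reduces to
\[\det(\partial_A^T\partial_A)=|H_{d-1}(T,\mathbb{Z})|^2\cdot n^{\binom{n-2}{d-1}}.\]

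To establish this, I would apply the Cauchy-Binet formula
\[\det(\partial_A^T\partial_A)=\sum_{\substack{S\subset\binom{[n]}{d}\\|S|=\binom{n-1}{d}}}\det(\partial_A[S,:])^2,\]
and then invoke the simplicial matrix-tree theorem (Kalai, Duval-Klivans-Martin) to evaluate each summand: $|\det(\partial_A[S,:])|=|H_{d-1}(T,\mathbb{Z})|\cdot|H_{d-2}(T',\mathbb{Z})|$ whenever the complement $\binom{[n]}{d}\setminus S$ is the top face set of a $(d-1)$-dimensional hypertree $T'$ on $[n]$, and the determinant vanishes otherwise. Summing and applying Kalai's formula~\eqref{kalaiformula} one dimension lower gives $\sum_{T'}|H_{d-2}(T',\mathbb{Z})|^2=n^{\binom{n-2}{d-1}}$, which yields the required factorization. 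This matrix-tree identity for the individual minors is the only non-trivial input of the plan, and I expect it to be the main obstacle; it would be obtained by a Smith normal form argument showing that both $A$ and $\binom{[n]}{d}\setminus S$ must carry hypertree structures in complementary dimensions for the minor to be non-singular, with the torsion orders of the two hypertrees appearing as the absolute value of the determinant. Everything else is bookkeeping with binomial coefficients and the classical spectrum of the full simplex.
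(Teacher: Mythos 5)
The paper does not prove this lemma at all --- it cites \cite{meszaros2022local} (Lemmas 2.7 and 2.10) --- so there is no in-paper argument to compare against; your proposal is a genuine reconstruction, and it is essentially correct. Part (a) is fine: the identity $\partial_{n,d}\partial_{n,d}^T+\partial_{n,d-1}^T\partial_{n,d-1}=nI$ on $(d-1)$-chains of the full simplex, together with $\ker\partial_{n,d-1}=\operatorname{im}\partial_{n,d}$ and Lemma~\ref{switch}, does force the nonzero eigenvalues of $P_n$ to be $1$, and a symmetric PSD matrix with spectrum in $\{0,1\}$ is an orthogonal projection of rank $\operatorname{rank}\partial_{n,d}=\binom{n-1}{d}$. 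Part (b) is also correct in outline, and your bookkeeping with $\binom{n-1}{d}=\binom{n-2}{d}+\binom{n-2}{d-1}$ and the degenerate case (non-hypertree $\Rightarrow$ dependent columns $\Rightarrow$ both sides vanish) checks out. The one load-bearing step you do not prove is the minor evaluation $|\det\partial_A[S,:]|=|H_{d-1}(T)|\cdot|H_{d-2}(T')|$; you correctly flag it and it is indeed a theorem of Duval--Klivans--Martin (with Kalai's lemma as the special case where $[n]\setminus S$ consists of the $(d-1)$-faces containing a fixed vertex), so this is a citation rather than a gap, but as written the proof is not self-contained at that point. A slightly more economical route for (b), closer to what one finds in the cited source, is to use only Kalai's special case: fix the row set $S_0$ of $(d-1)$-faces avoiding the vertex $n$, note that $P_n$ is the orthogonal projection onto the row space of the reduced boundary matrix $\partial[S_0,:]$, and apply the standard identity $\det P_n[A]=\det(\partial[S_0,A])^2/\det\bigl(\partial[S_0,:]\partial[S_0,:]^T\bigr)$ together with Kalai's evaluations of numerator and denominator; this avoids invoking the full simplicial matrix-tree theorem for arbitrary row sets, whereas your Cauchy--Binet-over-rows argument needs it but has the virtue of reproving Kalai's formula \eqref{kalaiformula} one dimension down as a byproduct.
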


As before, given a ${{n-1}\choose {d}}$ element subset $A$ of ${{[n]}\choose{d+1}}$, let $0\le \lambda_{A,1}\le\lambda_{A,2}\le\cdots\le\lambda_{A,{{n-1}\choose {d}}}$ be the eigenvalues of $P_n[A]$.

\begin{lemma}\label{nearzero}
For all $n\ge d+1$, we have
\[\sum_{k=0}^{{{n-1}\choose {d}}} \mathbb{P}\left(\sum_{i=1}^{k} \log\left(\frac{1}{n\lambda_{T_n(d),i}}\right)\ge n+2k\log\left(\frac{e{{n}\choose{d}}}{k}\right)\right)\le 2e^{-n}.\]
(Here for $k=0$, we interpret $0\log(\infty)$ as $0$.)
\end{lemma}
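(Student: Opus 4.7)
The plan is to reduce the event in each summand to an upper-tail event for the elementary symmetric polynomial $\sigma_k(\lambda_{T_n(d),1}^{-1},\dots,\lambda_{T_n(d),r}^{-1})$, and then apply Markov's inequality using the moment bound from Lemma~\ref{gendetspec}. The threshold $n+2k\log(eB/k)$ with $B={{n}\choose{d}}$ is, as we shall see, calibrated exactly so that the factor $(eB/k)^{2k}$ from Lemma~\ref{gendetspec} cancels against the corresponding piece of the threshold, leaving a residual that decays geometrically in $k$.

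Abbreviating $\lambda_i=\lambda_{T_n(d),i}$, the event in the $k$-th term is equivalent to
\[\prod_{i=1}^k \lambda_i^{-1} \ge n^k e^n \left(\frac{eB}{k}\right)^{2k}.\]
Since the eigenvalues are listed in increasing order, $\prod_{i=1}^k \lambda_i^{-1}$ is the largest single summand of $\sigma_k(\lambda_1^{-1},\dots,\lambda_r^{-1})$ and is therefore dominated by the whole sum. Hence by Markov's inequality and Lemma~\ref{gendetspec}, with $m={{n}\choose{d+1}}$ and $r={{n-1}\choose{d}}$, the probability in question is at most
\[\frac{(e^2 mr/k^2)^k}{n^k e^n (eB/k)^{2k}}.\]

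Next, using the identity $m=B(n-d)/(d+1)$ together with $r\le B$, one has $mr/n\le B^2/(d+1)$. Substituting this into the previous display, the factor $(eB/k)^{2k}$ cancels cleanly and we are left with $(d+1)^{-k}e^{-n}$ for each $k\ge 1$. The $k=0$ term contributes $0$ because its event degenerates to $0\ge n$. Summing the resulting geometric series yields
\[\sum_{k=1}^{{{n-1}\choose{d}}}(d+1)^{-k}e^{-n}\le \frac{e^{-n}}{d},\]
which is at most $2e^{-n}$ (in fact at most $e^{-n}/2$, using $d\ge 2$).

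There is no hard step in this plan once Lemma~\ref{gendetspec} is in hand; the argument is essentially a Markov estimate combined with a union bound over $k$. The only subtlety worth flagging is the exact form of the threshold: the factor $2k\log(eB/k)$ is the unique value that makes the moment bound from Lemma~\ref{gendetspec} cancel against it and produces geometric decay in $k$, rather than a tail that would either fail to be summable or be significantly larger than needed.
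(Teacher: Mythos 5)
Your proposal is correct and follows essentially the same route as the paper: bound $\prod_{i=1}^k\lambda_{T_n(d),i}^{-1}$ by $\sigma_k$ of the inverse eigenvalues, apply Markov's inequality with the moment bound of Lemma~\ref{gendetspec} (via Lemma~\ref{lemmahypertreeisdet}), observe that the threshold makes the $(e\binom{n}{d}/k)^{2k}$ factor cancel to leave $(d+1)^{-k}e^{-n}$, and sum the geometric series. The algebraic details (your use of $m=B(n-d)/(d+1)$ and the vacuous $k=0$ event) match the paper's computation up to cosmetic rearrangement.
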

\begin{proof}

Let $k\ge 1$. By Lemma~\ref{lemmahypertreeisdet} and Lemma~\ref{gendetspec}, we see that
\[\mathbb{E} \prod_{i=1}^{k} \lambda_{T_n(d),i}^{-1}\le \mathbb{E} \sigma_{k}\left(\lambda_{T_n(d),1}^{-1},\lambda_{T_n(d),2}^{-1},\dots,\lambda_{T_n(d),{{n-1}\choose{d}}}^{-1}\right)\le \left(\frac{e^2 {{n}\choose {d+1}}{{n}\choose {d}}}{k^2}\right)^{k}. \]
 Combining this with Markov's inequality, we obtain that
 \begin{align*}
 \mathbb{P}&\left(\sum_{i=1}^{k} \log\left(\frac{1}{n\lambda_{T_n(d),i}}\right)\ge n+2k\log\left(\frac{e{{n}\choose{d}}}{k}\right)\right)\\&=
 \mathbb{P}\left(\prod_{i=1}^{k} \lambda_{T_n(d),i}^{-1}\ge e^n n^{k} \left(\frac{e{{n}\choose{d}}}{k}\right)^{2k}\right)\\&\le e^{-n}n^{-k} \left(\frac{k}{e{{n}\choose{d}}}\right)^{2k} \mathbb{E} \prod_{i=1}^{k} \lambda_{T_n(d),i}^{-1}\\&\le e^{-n} \left(\frac{ {{n}\choose {d+1}}}{n{{n}\choose {d}}}\right)^{k} \\&\le
 e^{-n}(d+1)^{-k}.
 \end{align*}
The above inequality is trivially also true for $k=0$. Summing these inequalities over $k$ the statement follows. 
\end{proof}

\subsection{The local weak limit of determinantal hypertrees}

Given a $d$-dimensional hypertree $S_n$ on the vertex set $[n]$, we define the bipartite graph $G_n$ as follows: The two color classes of $G_n$ are $S_n(d)$ and $S_n(d-1)={{[n]}\choose{d}}$. A $d$-dimensional face $A\in S_n(d)$ is connected to a $(d-1)$-dimensional face $B\in S_n(d-1)$ in the graph $G_n$ if and only if $B\subset A$.

Let $(F,o)$ be finite rooted tree, that is, $F$ is a finite tree, and $o$ is a distinguished vertex of $F$ called the root. Let $r$ be the depth of $(F,o)$, that is, the maximum of the distances of the vertices from the root.

Given a vertex $o_n$ of $G_n$, let $B_r(G_n,o_n)$ be the $r$-neighborhood of $o_n$, that is, the subgraph of $G_n$ induced by all the vertices at distance at most $r$ from the root $o_n$. By choosing $o_n$ as the root, $B_r(G_n,o_n)$ can be considered as a rooted graph.

Let us define
\[t(S_n,(F,o))=\frac{1}{{{n}\choose{d}}}\left|\left\{o_n\in {{[n]}\choose{d}}\,:\, B_r(G_n,o_n)\cong (F,o)\right\}\right|,\]
where $B_r(G_n,o_n)\cong (F,o)$ refers to an isomorphism of rooted graphs, that is, we are looking for a graph isomorphism which also preserves the root.

In the paper~\cite{meszaros2022local}, a random infinite rooted tree $(\mathbb{T}_d,o)$ called the semi-$d$-ary skeleton tree was defined, and it was proved that the (quenched) local weak limit of determinantal hypertrees is given by this random rooted tree $(\mathbb{T}_d,o)$. More precisely, we have the following theorem:
\begin{theorem}\label{thmlocal}
 For any finite rooted tree $(F,o)$ of depth $r$, we have
 \[\lim_{n\to\infty} t(T_n,(F,o))=\mathbb{P}(B_r(\mathbb{T}_d,o)\cong (F,o))\]
 in probability.
\end{theorem}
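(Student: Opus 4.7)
The plan is to establish convergence in expectation, identify the limit, and then upgrade to convergence in probability via a second-moment estimate. Every step exploits the determinantal description of $T_n(d)$ from Lemma~\ref{lemmahypertreeisdet}, together with the explicit form of $P_n = \frac{1}{n}\partial_{n,d}^T\partial_{n,d}$, whose entries depend only on the intersection pattern of the indexing $(d+1)$-subsets (the diagonal entries equal $(d+1)/n$, the entries corresponding to faces sharing a codimension-one face equal $\pm 1/n$, and the rest vanish).

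\textbf{First moment.} Fix $(F,o)$ of depth $r$ and a deterministic root $o_n \in \binom{[n]}{d}$. The event $\{B_r(G_n,o_n)\cong(F,o)\}$ is measurable with respect to the restriction of $T_n(d)$ to a combinatorial $r$-neighborhood of $o_n$ in $\binom{[n]}{d+1}$. By inclusion--exclusion, its indicator decomposes into a finite signed sum of indicators $\mathbbm{1}(S\subset T_n(d))$ for bounded-size $S$, together with ``no further face is present'' conditions handled via the conditional determinantal structure. The corresponding probabilities are $\det(P_n[S])$, and because the entries of $P_n$ are functions of $|A\cap B|$ alone, each such determinant stabilizes as $n\to\infty$. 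Summing over $o_n$ yields
\[\mathbb{E}\,t(T_n,(F,o))\to p(F,o)\]
for a limit $p(F,o)$ independent of the choice of root.

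\textbf{Identification of the limit.} To match $p(F,o)=\mathbb{P}((\mathbb{T}_d,o)\cong(F,o))$, I would construct an infinite-dimensional projection $P_\infty$ on a Hilbert space naturally indexed by the potential local neighborhoods, and show that the relevant submatrices of $P_n$ converge entrywise to those of $P_\infty$. The determinantal process associated to $P_\infty$ gives the bipartite-graph encoding of the semi-$d$-ary skeleton tree $(\mathbb{T}_d,o)$, and finite-dimensional neighborhood statistics are continuous functions of finitely many entries of the underlying projection, so the two limits must coincide.

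\textbf{Concentration.} For convergence in probability it suffices to bound the variance:
\[\mathrm{Var}(t(T_n,(F,o)))=\binom{n}{d}^{-2}\sum_{o_n,o'_n}\mathrm{Cov}\bigl(\mathbbm{1}_{B_r(G_n,o_n)\cong(F,o)},\,\mathbbm{1}_{B_r(G_n,o'_n)\cong(F,o)}\bigr).\]
The $O(n^d)$ diagonal pairs contribute $O(\binom{n}{d}^{-1})=o(1)$. For the remaining pairs, expanding each covariance by inclusion--exclusion produces a finite signed combination of terms $\det(P_n[S_1\cup S_2])-\det(P_n[S_1])\det(P_n[S_2])$. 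For the $(1-o(1))$ fraction of pairs whose $r$-neighborhoods in the set system are disjoint, the cross-blocks of $P_n[S_1\cup S_2]$ vanish in the limit, so the determinant factorizes and the covariance is $o(1)$. A crude $O(1)$ bound for the remaining ``close'' pairs completes the argument, and Chebyshev's inequality yields the claim.

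\textbf{Main obstacle.} The most delicate step is the explicit identification of the limit with $(\mathbb{T}_d,o)$: one must build the infinite-dimensional projection $P_\infty$ and rigorously translate its determinantal structure into the combinatorial description of the semi-$d$-ary skeleton tree, tracking the orientations and signs inherited from $\partial_{n,d}$. A secondary technical point is the asymptotic factorization used in the variance bound; the off-diagonal entries of $P_n$ are non-negligible within the neighborhoods, so the combinatorial support pattern of $P_n[S_1\cup S_2]$ must be analyzed carefully to separate ``within-neighborhood'' from ``between-neighborhood'' contributions.
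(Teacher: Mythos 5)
First, a point of orientation: the paper does not prove Theorem~\ref{thmlocal} at all --- it is imported verbatim from the author's earlier work \cite{meszaros2022local}, where the semi-$d$-ary skeleton tree $(\mathbb{T}_d,o)$ is defined and the quenched local weak convergence is established. So there is no in-paper argument to compare against, and what you have written is an outline for reproving an external result. Your overall strategy (first moment via the determinantal kernel, second moment for concentration, Chebyshev) is indeed the standard and correct skeleton for such a statement, and the variance step in particular is plausible as sketched: for roots whose neighborhoods are disjoint the cross-blocks of the kernel are supported on pairs of faces sharing a $d$-subset, which forces approximate factorization of the relevant determinants.

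However, the first-moment step as written contains two genuine errors. The event $\{B_r(G_n,o_n)\cong(F,o)\}$ is \emph{not} a finite signed combination of containment indicators $\mathbbm{1}(S\subset T_n(d))$ with bounded-size $S$: specifying the $r$-neighborhood requires asserting the \emph{absence} of cofacets, and a fixed $d$-subset $o_n$ has $n-d$ candidate cofacets in $\binom{[n]}{d+1}$, so the exclusion constraints involve $\Theta(n)$ faces. Handling this requires the complementation/Janossy-type determinant formulas for determinantal processes (probabilities of the form $\mathbb{P}(S\subset X,\ S'\cap X=\emptyset)$ expressed as determinants of a modified kernel on $S\cup S'$ with $|S'|\to\infty$) together with a genuine limiting argument; waving at "the conditional determinantal structure" does not close this. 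Second, the claim that "each such determinant stabilizes as $n\to\infty$" is false: every entry of $P_n$ is $O(1/n)$, so $\det(P_n[S])=O(n^{-|S|})\to 0$ for any fixed nonempty $S$. The nontrivial limit of a neighborhood probability arises only after summing over the $\Theta(n^{|S|})$ ways of realizing a given tree shape around $o_n$, and this counting is where the actual Poisson-type offspring distribution of $(\mathbb{T}_d,o)$ emerges. Finally, the identification of the limit with the semi-$d$-ary skeleton tree --- which you correctly flag as the main obstacle --- is precisely the content of the theorem and is left entirely unexecuted, so the proposal as it stands is an outline with its central step missing.
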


\begin{remark}\label{remark1out}
 Linial and Peled~\cite{linial2019enumeration} determined the local weak limit of $1$-out $d$-complexes. It turns out that these complexes have the same local weak limit as determinantal hypertrees~\cite{meszaros2022local}. This implies that certain results proved by Linial and Peled~\cite{linial2019enumeration} for $1$-out $d$-complexes are also true for determinantal hypertrees. 
\end{remark}

\subsection{Simple bounds on the size of the homology group}
\begin{lemma}\label{cdlower}
Let $0<c<\sqrt{\frac{d+1}e}$. Then
\[\lim_{n\to\infty}\mathbb{P}\left(|H_{d-1}(T_n,\mathbb{Z})|\le c^{{{n}\choose {d}}}\right)=0.\]
\end{lemma}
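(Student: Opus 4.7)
The plan is to use equation~\eqref{intHvsmu} together with Lemma~\ref{gendetspec} applied at the full rank $k = r := {{n-1}\choose{d}}$. By Lemma~\ref{switch} and Lemma~\ref{lemmahypertreeisdet}, the numbers $n\lambda_{T_n(d), i}$ for $i = 1, \ldots, r$ are precisely the nonzero eigenvalues of the Laplacian $\partial_{T_n, d}\partial_{T_n, d}^T$, so \eqref{intHvsmu} becomes
\[\frac{\log|H_{d-1}(T_n, \mathbb{Z})|}{{{n}\choose{d}}} = \frac{1}{2{{n}\choose{d}}} \sum_{i=1}^r \log(n\lambda_{T_n(d), i}) + o(1).\]
Since $2\log c < \log((d+1)/e)$ for any fixed $c < \sqrt{(d+1)/e}$, it suffices to prove that with probability tending to $1$ this sum is at least ${{n}\choose{d}} \log\frac{d+1}{e} - o\!\left({{n}\choose{d}}\right)$.

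Observing that $\sigma_r$ in $r$ variables is just their product, Lemma~\ref{gendetspec} at $k = r$ gives
\[\mathbb{E}\prod_{i=1}^r \lambda_{T_n(d), i}^{-1} \le {{m}\choose{r}} \le \left(\frac{em}{r}\right)^r = \left(\frac{en}{d+1}\right)^r,\]
where $m = {{n}\choose{d+1}}$ and we used $m/r = n/(d+1)$. Markov's inequality with threshold $(en/(d+1))^r e^n$ then yields, with probability at least $1 - e^{-n}$, the bound $\prod_i (n\lambda_{T_n(d), i})^{-1} < (e/(d+1))^r e^n$, i.e.,
\[\sum_{i=1}^r \log(n\lambda_{T_n(d), i}) > r \log\frac{d+1}{e} - n.\]
Dividing by ${{n}\choose{d}}$ and noting that $r/{{n}\choose{d}} = (n-d)/n \to 1$ together with $n/{{n}\choose{d}} \to 0$, the required lower bound follows, and with it the lemma.

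There is no real obstacle here; the one point worth flagging is why the already-proved Lemma~\ref{nearzero} is not used directly. That lemma controls $\mathbb{E}\sigma_k$ via the convenient estimate $(e^2 mr/k^2)^k$, which at $k = r$ is weaker than ${{m}\choose{r}}$ by a factor of roughly $e^r$ and would therefore only yield $c \ge 1/\sqrt{e}$, losing the decisive $\sqrt{d+1}$ factor. Invoking Lemma~\ref{gendetspec} directly at the full rank, with the sharper bound ${{m-r+k}\choose k}{{r}\choose k}$, recovers the constant $(d+1)/e$.
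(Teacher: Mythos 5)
Your proof is correct, but it takes a genuinely different route from the paper. The paper argues by a direct first-moment count: since the determinantal measure assigns weight $|H_{d-1}(S,\mathbb{Z})|^2/n^{{n-2}\choose d}$ to each hypertree $S$, the probability of the event is at most (number of hypertrees)$\cdot c^{2{n\choose d}}n^{-{{n-2}\choose d}}$, and the number of hypertrees is at most ${{{n}\choose{d+1}}\choose{{n-1}\choose d}}\le(en/(d+1))^{{n-1}\choose d}$; no spectral input is needed. You instead route through the spectral identity $\log|H_{d-1}|=\frac12\sum_i\log(n\lambda_{T_n(d),i})-\frac12{{n-2}\choose{d-1}}\log n$ (which indeed holds deterministically for every hypertree, so invoking Lemma~\ref{Hvslog}/\eqref{intHvsmu} here is legitimate even though that lemma is stated for nice sequences), apply Lemma~\ref{gendetspec} at full rank $k=r$, and finish with Markov. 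All steps check out: $\sigma_r$ is the product, $m/r=n/(d+1)$, and $r/{n\choose d}\to1$ absorbs the error terms since the gap $\frac12\log\frac{d+1}{e}-\log c$ is a positive constant times ${n\choose d}$. It is worth noting that the two arguments are secretly the same computation: $\mathbb{E}\prod_i\lambda_{X,i}^{-1}=\sum_{A:\det P[A]>0}\det(P[A])\cdot\det(P[A])^{-1}$ is exactly the number of hypertrees, and your Markov step on $\prod_i(n\lambda_i)^{-1}$ unwinds, via $\pi_n=n^{{n-2}\choose{d-1}}|H_{d-1}|^2$, to the paper's union bound. What your version buys is uniformity with the rest of the paper's spectral machinery; what the paper's version buys is brevity and independence from Section~\ref{secgendet}. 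One small inaccuracy in your closing remark: using the weaker estimate $(e^2mr/k^2)^k$ at $k=r$ would cost a factor $e^r$ and yield the lower bound $\sqrt{d+1}/e$ rather than $1/\sqrt{e}$ --- you would lose a factor $\sqrt{e}$, not the $\sqrt{d+1}$; this does not affect the validity of your proof.
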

\begin{proof}
Let $\mathcal{T}(n)$ be the set of $d$-dimensional hypertrees on the vertex set $[n]$, and let
\[\mathcal{T}_0(n)=\left\{S\in \mathcal{T}(n)\,:\,|H_{d-1}(S,\mathbb{Z})|\le c^{{{n}\choose {d}}}\right\}.\]

Clearly,
\[|\mathcal{T}_0(n)|\le |\mathcal{T}(n)|\le {{{n}\choose{d+1}}\choose{{n-1}\choose{d}}}\le \left(\frac{e{{n}\choose{d+1}}}{{{n-1}\choose{d}}}\right)^{{n-1}\choose{d}}=\left(\frac{en}{d+1}\right)^{{n-1}\choose{d}}. \]

Thus,
\begin{align*}\mathbb{P}\left(|H_{d-1}(T_n,\mathbb{Z})|\le c^{{{n}\choose {d}}}\right)&=\sum_{S\in \mathcal{T}_0(n)} \frac{|H_{d-1}(S,\mathbb{Z})|^2}{n^{{{n-2}\choose{d}}}}\\&\le \frac{|\mathcal{T}_0(n)| c^{2{{n}\choose{d}}}}{n^{{{n-2}\choose{d}}}}\\&\le
\left(\frac{en}{d+1}\right)^{{n-1}\choose{d}} c^{2{{n}\choose{d}}}n^{-{{n-2}\choose{d}}}
\\&=\left(\frac{ec^2}{d+1}\right)^{{{n}\choose{d}}} \left(\frac{en}{d+1}\right)^{-{{n-1}\choose{d-1}}}n^{{{n}\choose{d}}-{{n-2}\choose{d}}}\\&=\left(\frac{ec^2}{d+1}\right)^{{{n}\choose{d}}} \exp\left(O\left(n^{d-1}\log (n)\right)\right).
\end{align*}
One can easily prove that the bound above tends to $0$ by relying on the fact that $\frac{ec^2}{d+1}<1$.
\end{proof}
Somewhat weaker versions of Lemma~\ref{cdlower} were proved in~\cite{kahle2022topology}.

Kalai~\cite{kalai1983enumeration} proved the following estimate.
\begin{lemma}\label{cdupper}
For any $d$-dimensional hypertree $S_n$ on $n$ vertices, we have
\[|H_{d-1}(S_n,\mathbb{Z})|\le \sqrt{d+1}^{{n-1}\choose{d}}.\]
\end{lemma}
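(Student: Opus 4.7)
The plan is to identify $|H_{d-1}(S_n,\mathbb{Z})|$ with the determinantal content of the $d$-th boundary matrix and then apply Hadamard's inequality. Let $\partial=\partial_{S_n,d}$ denote the integer matrix of the $d$-th boundary map, so $\partial$ has $\binom{n}{d}$ rows (indexed by $\binom{[n]}{d}=S_n(d-1)$) and $\binom{n-1}{d}$ columns (indexed by $S_n(d)$). For each $d$-face $\sigma$, the corresponding column has exactly $d+1$ nonzero entries, each equal to $\pm 1$ (the signed incidences of the $(d-1)$-subfaces of $\sigma$); in particular, every column has squared Euclidean norm exactly $d+1$.

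The first step is to verify the identity
\[
|H_{d-1}(S_n,\mathbb{Z})|^2 \;=\; \det(\partial^{T}\partial).
\]
Since the $(d-1)$-skeleton of $S_n$ is the full skeleton of the simplex, $\ker\partial_{d-1}\subset\mathbb{Z}^{S_n(d-1)}$ is a free abelian group of rank $\binom{n}{d}-\binom{n-1}{d-1}=\binom{n-1}{d}$. Property (c) forces $\mathrm{im}(\partial)$ to have the same rank, so $\partial$ has full column rank $\binom{n-1}{d}$. Putting $\partial$ in Smith normal form $\partial=U\bigl(\begin{smallmatrix}D\\0\end{smallmatrix}\bigr)V$ with $D=\mathrm{diag}(d_1,\dots,d_{\binom{n-1}{d}})$, the cokernel of $\partial$ inside $\mathbb{Z}^{S_n(d-1)}$ splits as the torsion part $\bigoplus_i \mathbb{Z}/d_i\mathbb{Z}$ plus a free part of rank $\binom{n-1}{d-1}$; a standard chasing argument identifies the torsion part with $H_{d-1}(S_n,\mathbb{Z})$ (the free part being $\mathbb{Z}^{S_n(d-1)}/\ker\partial_{d-1}$). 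Hence $|H_{d-1}(S_n,\mathbb{Z})|=d_1\cdots d_{\binom{n-1}{d}}$, and $\det(\partial^{T}\partial)=(d_1\cdots d_{\binom{n-1}{d}})^2$ gives the identity.

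The second step is an application of Hadamard's inequality for rectangular matrices: for any matrix $A$ with columns $a_1,\ldots,a_q$,
\[
\det(A^{T}A)\;\le\;\prod_{i=1}^{q}\|a_i\|^{2}.
\]
Plugging in $A=\partial$ and the column-norm computation above yields
\[
\det(\partial^{T}\partial)\;\le\;(d+1)^{\binom{n-1}{d}},
\]
which, combined with the identity of Step 1, is exactly the claimed bound $|H_{d-1}(S_n,\mathbb{Z})|\le\sqrt{d+1}^{\binom{n-1}{d}}$.

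The main conceptual obstacle is the bookkeeping in Step 1: one must check that $\partial$ has full column rank and that the free part of $\mathrm{coker}(\partial)$ has rank exactly $\binom{n-1}{d-1}$, so that the torsion of $\mathrm{coker}(\partial)$ coincides with $H_{d-1}(S_n,\mathbb{Z})$ and not some extension thereof. Both facts are immediate consequences of the hypertree axioms (a)--(c) combined with the structure of the complete $(d-1)$-skeleton, after which the rest of the argument is a one-line invocation of Hadamard.
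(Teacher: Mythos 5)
The paper does not prove this lemma; it cites Kalai. Your overall strategy (Hadamard's inequality applied to the boundary matrix, whose columns have squared norm $d+1$) is indeed the standard route, but Step~1 contains a genuine error: the identity $|H_{d-1}(S_n,\mathbb{Z})|^2=\det(\partial^T\partial)$ is false. In the Smith normal form computation you implicitly use $U^TU=I$, but $U$ is only unimodular, not orthogonal; what one actually gets is $\det(\partial^T\partial)=(d_1\cdots d_r)^2\cdot\det(U'^TU')$, where $U'$ consists of the first $r=\binom{n-1}{d}$ columns of $U$, i.e.\ a basis of the saturation of $\mathrm{im}(\partial)$, which is $\ker\partial_{d-1}$. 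The factor $\det(U'^TU')$ is the squared covolume of the lattice $\ker\partial_{d-1}\subset\mathbb{Z}^{\binom{[n]}{d}}$, and it is \emph{not} $1$: the correct identity, which the paper itself uses in Lemma~\ref{Hvslog} (via \cite{duval2009simplicial} and Kalai's formula), is $\det(\partial^T\partial)=n^{\binom{n-2}{d-1}}\,|H_{d-1}(S_n,\mathbb{Z})|^2$. A sanity check in the degenerate case $d=1$, $n=3$: for a spanning tree of $K_3$ one has $\det(\partial^T\partial)=3$ while the reduced $H_0$ is trivial.

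The error happens to be in the harmless direction: $\det(U'^TU')$ is a positive integer (by Cauchy--Binet it is a sum of squares of integer minors, and it is nonzero), so $|H_{d-1}(S_n,\mathbb{Z})|^2\le\det(\partial^T\partial)$ still holds, and combined with your (correct) Hadamard step $\det(\partial^T\partial)\le(d+1)^{\binom{n-1}{d}}$ this does yield the claimed bound. So the proof is repairable by replacing the asserted equality with this inequality and justifying the covolume factor $\ge 1$, or by quoting the exact identity above. Kalai's own argument avoids the issue entirely: delete the $\binom{n-1}{d-1}$ rows of $\partial$ indexed by $(d-1)$-faces containing a fixed vertex to obtain a square $\binom{n-1}{d}\times\binom{n-1}{d}$ matrix whose determinant equals $\pm|H_{d-1}(S_n,\mathbb{Z})|$, and apply Hadamard to that matrix directly. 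As written, though, Step~1 asserts a false identity and its justification has a real gap.
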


\subsection{Nice sequences of hypertrees}

For each $n\ge d+1$, let $S_n$ be a $d$-dimensional (deterministic) hypertree on the vertex set $[n]$. We say that $S_{d+1},S_{d+2},\dots$ is a nice sequence of hypertrees if 
\begin{enumerate}[(C1)]
 \item For all large enough $n$, for any choice of $0\le k\le {{n-1}\choose{d}}$, we have
 \[\sum_{i=1}^{k} \log\left(\frac{1}{n\lambda_{S_n(d),i}}\right)\le n+2k\log\left(\frac{e{{n}\choose{d}}}{k}\right).\]
 \item\label{C2} For any finite rooted tree $(F,o)$ of depth $r$, we have
 \[\lim_{n\to\infty} t(S_n,(F,o))=\mathbb{P}(B_r(\mathbb{T}_d,o)\cong (F,o)).\]
\end{enumerate}

\begin{lemma}\label{nicecoupling}
There is a coupling of $T_{d+1},T_{d+2},\dots$ such that $T_{d+1},T_{d+2},\dots$ is a nice sequence of hypertrees with probability $1$.
\end{lemma}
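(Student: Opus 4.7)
The plan is to handle conditions (C1) and (C2) separately. Condition (C1) depends only on the marginal distribution of $T_n$, and Lemma~\ref{nearzero} bounds the probability of its failure at step $n$ by $2e^{-n}$ (via the union bound across the countably many inequalities indexed by $k$). Since $\sum_n 2e^{-n}<\infty$, the Borel--Cantelli lemma ensures that (C1) holds for all sufficiently large $n$ almost surely, under \emph{any} coupling of $(T_n)_{n\ge d+1}$. It therefore suffices to construct a coupling under which (C2) also holds almost surely.

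For (C2), enumerate the countably many finite rooted trees as $(F_1,o_1),(F_2,o_2),\ldots$ and write $X_n^{(i)}=t(T_n,(F_i,o_i))$ and $p_i=\mathbb{P}((\mathbb{T}_d,o)\cong(F_i,o_i))$. By Theorem~\ref{thmlocal}, $X_n^{(i)}\to p_i$ in probability for each $i$. Because the limits are deterministic, coordinatewise convergence in probability upgrades to joint convergence: the full vector $X_n=(X_n^{(i)})_{i\ge 1}$ converges in distribution to the constant $(p_i)_{i\ge 1}$ on the Polish space $\mathbb{R}^{\mathbb N}$ endowed with the product topology. Skorokhod's representation theorem then provides a common probability space carrying random vectors $\widetilde X_n$ with the same distribution as $X_n$, such that $\widetilde X_n^{(i)}\to p_i$ almost surely for every $i$ simultaneously.

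To lift this to a coupling of the $T_n$'s themselves, note that the map $\phi_n\colon S\mapsto(t(S,(F_i,o_i)))_{i\ge 1}$ is deterministic and the state space of $d$-dimensional hypertrees on $[n]$ is finite, so the conditional law $\mathbb{P}(T_n=\cdot\mid\phi_n(T_n)=x)$ is well defined under the determinantal measure. After enlarging the probability space, for each $n$ sample $\widetilde T_n$ from this conditional law given $\widetilde X_n$, using independent auxiliary randomness across $n$. Then $\widetilde T_n$ has the correct determinantal marginal, while $\phi_n(\widetilde T_n)=\widetilde X_n$ almost surely, so $t(\widetilde T_n,(F_i,o_i))\to p_i$ almost surely for every $i$, establishing (C2). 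Combined with the Borel--Cantelli argument for (C1), the resulting joint distribution of $(\widetilde T_n)_{n\ge d+1}$ is nice almost surely.

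The main subtlety lies in the Skorokhod step: one must package the entire countable family of local statistics into a single random variable on a Polish space so that the almost sure limit can be taken simultaneously for all test trees, and one must verify that the conditional resampling lift reproduces the determinantal marginal; both points are routine given the deterministic limits and the discreteness of the hypertree state space.
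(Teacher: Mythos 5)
Your proof is correct and follows essentially the same route as the paper: (C1) via Lemma~\ref{nearzero} and Borel--Cantelli (valid under any coupling), and (C2) by turning the in-probability convergence of the countable family of local statistics into almost sure convergence under a suitable coupling. The paper compresses the statistics into a single scalar $\Delta(T_n)=\sum_i 2^{-i}|t(T_n,(F_i,o_i))-p_i|$ and asserts the coupling construction is straightforward; your Skorokhod-plus-conditional-resampling lift is just an explicit, correct instantiation of that step.
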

\begin{proof}
 Let $((F_i,o_i))_{i=1}^\infty$ be an enumeration of all the finite rooted trees (up to isomorphism). Let $r_i$ be the depth of $(F_i,o_i)$. Then, we define
 \begin{align*}\Delta_i(T_n)&= \left|t(T_n,(F_i,o_i))-\mathbb{P}(B_{r_i}(\mathbb{T}_d,o)\cong (F_i,o_i))\right|,\text{ and }\\
 \Delta(T_n)&=\sum_{i=1}^\infty 2^{-i} \Delta_i(T_n).
 \end{align*}
 We claim that $\Delta(T_n)$ converges to $0$ in probability. To prove this, let $\ell$ be any positive integer. By Theorem~\ref{thmlocal}, \[\mathbb{P}\left(\Delta_i(T_n)>2^{-\ell}\right)<\frac{2^{-\ell}}{\ell}\text{ for all }i=1,\dots,\ell\]
 provided that $n$ is large enough. Thus, for all large enough $n$, we have
 \[\mathbb{P}\left(\Delta_i(T_n)\le 2^{-\ell}\text{ for all }i=1,\dots,\ell\right)>1-2^{-\ell}.\]
 On the event above, we have
 \[\Delta(T_n)=\sum_{i=1}^\ell 2^{-i}\Delta_i(T_n)+\sum_{i=\ell+1}^\infty2^{-i}\Delta_i(T_n)\le \sum_{i=1}^\ell 2^{-i}\cdot 2^{-\ell}+\sum_{i=\ell+1}^\infty 2^{-i}\le 2^{-(\ell-1)}.\]
 Thus, for all large enough $n$, $\mathbb{P}(\Delta(T_n)>2^{-(\ell-1)})< 2^{-\ell}$. Therefore, $\Delta(T_n)$ indeed converges to $0$ in probability. 

 Let $\mathcal{T}(n)$ be the set of $d$-dimensional hypertrees on the vertex set $[n]$. For each $n\ge d+1$, there is a map $g_n:(0,1]\to \mathcal{T}(n)$, with the following properties:
 \begin{itemize}
     \item For all $S\in\mathcal{T}(n)$, the preimage $g_n^{-1}(S)$ is an interval $(a_S,b_S]$ such that \[b_S-a_S=\mathbb{P}(T_n=S).\]
     \item The function $x\mapsto \Delta(g_n(x))$ is monotonically decreasing.
 \end{itemize}

Let $U$ be a uniform random element of $(0,1]$, then $g_{d+1}(U),g_{d+2}(U),\dots$ is a coupling of $T_{d+1},T_{d+2},\dots$. We show that this coupling has the desired properties. Fix $x\in (0,1]$. Let $\varepsilon>0$. Then $\Delta(g_n(x))>\varepsilon$ if and only if $x\le \mathbb{P}(\Delta(T_n)>\varepsilon)$. Since $\Delta(T_n)$ converges to $0$ in probability, we have $\lim_{n\to\infty}\mathbb{P}(\Delta(T_n)>\varepsilon)=0$. So $\Delta(g_n(x))\le \varepsilon$ for all sufficiently large $n$. Thus, $\lim_{n\to\infty} \Delta(g_n(x))=0$. Then for any positive integer $i$, we have
\[\lim_{n\to\infty} \Delta_i(g_n(x))\le 2^i \lim_{n\to\infty} \Delta(g_n(x))=0.\]
Therefore, (C2) holds for the coupling $g_{d+1}(U),g_{d+2}(U),\dots$. 
 
 Combining Lemma~\ref{nearzero} with the Borel--Cantelli lemma, we see that (C1) also holds.
\end{proof}

\section{Deterministic results}\label{detsection}

 Theorem~\ref{thm1} follows by combining Lemma~\ref{cdlower}, Lemma~\ref{cdupper}, Lemma~\ref{nicecoupling} and the following deterministic statement.

\begin{lemma}\label{fornice}
 Let $S_{d+1},S_{d+2},\dots$ be a nice sequence of $d$-dimensional hypertrees. Then
 \[\lim_{n\to\infty} \frac{\log |H_{d-1}(S_n,\mathbb{Z})|}{{{n}\choose{d}}}=c_d,\]
 for some constant $c_d$ (depending only on $d$).
\end{lemma}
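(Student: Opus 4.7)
The plan is to interpret $\log|H_{d-1}(S_n,\mathbb{Z})|/\binom{n}{d}$ spectrally and to pass to the limit, using (C1) and (C2) to tame the singularities of $\log$ at $0$ and at $\infty$. Let $L_n:=\partial_{S_n,d}\partial_{S_n,d}^T$ and let $\mu_n$ be the empirical spectral distribution of $L_n$, normalized to a probability measure on $[0,\infty)$. By Lemma~\ref{Hvslog},
\[\frac{\log|H_{d-1}(S_n,\mathbb{Z})|}{\binom{n}{d}} = \frac{1}{2}\int_{(0,\infty)}\log(t)\, d\mu_n(t) + o(1),\]
so it is enough to show the right-hand integral converges to a $d$-dependent constant $2c_d$.

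The first step is to establish weak convergence $\mu_n \Rightarrow \mu$ for a deterministic probability measure $\mu$ depending only on $d$, via the moment/trace method. Each moment $\int t^k\, d\mu_n = \binom{n}{d}^{-1}\Tr(L_n^k)$ counts closed walks of length $2k$ in the bipartite incidence graph $G_n$ based at a $d{-}1$-face, and so is determined by the isomorphism types of $2k$-neighborhoods of $d{-}1$-faces in $G_n$. Condition (C2) then forces every moment to converge to the corresponding spectral moment at the root of the Laplacian of the semi-$d$-ary skeleton tree $(\mathbb{T}_d,o)$. The Linial--Peled analysis~\cite{linial2019enumeration}, applicable here by Remark~\ref{remark1out}, verifies that this limit moment sequence determines a unique probability measure $\mu$; moment convergence then upgrades to weak convergence.

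Because $\log$ is unbounded, I split $\int_{(0,\infty)}\log(t)\, d\mu_n(t)$ at continuity points $\epsilon\in(0,1)$ and $M>1$ of $\mu$. The middle piece $\int_\epsilon^M \log(t)\, d\mu_n(t)$ converges directly by weak convergence. For the upper tail, moment convergence gives $\int t\, d\mu_n = (d{+}1)(n{-}d)/n \to d{+}1 = \int t\, d\mu$, so the identity function is uniformly integrable under $\{\mu_n\}$; combined with the bound $\log(t)\le t$ for $t\ge 1$, this yields $\sup_n \int_M^\infty \log(t)\, d\mu_n(t)\to 0$ as $M\to\infty$.

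The main obstacle is the lower tail $\int_{(0,\epsilon]}|\log(t)|\, d\mu_n(t)$, and this is precisely what condition (C1) is designed to control. Using the identity $nP_n[S_n(d)] = \partial_{S_n,d}^T\partial_{S_n,d}$ together with Lemma~\ref{switch}, the values $\{n\lambda_{S_n(d),i}\}_{i=1}^{\binom{n-1}{d}}$ coincide with the non-zero eigenvalues of $L_n$. If $k_n(\epsilon)$ denotes the number of these below $\epsilon$, then (C1) translates to
\[\sum_{i:\, 0<\lambda_i(L_n)\le\epsilon} \log(1/\lambda_i(L_n)) \;\le\; n + 2k_n(\epsilon)\log\!\left(\frac{e\binom{n}{d}}{k_n(\epsilon)}\right).\]
After dividing by $\binom{n}{d}$, the first term is $o(1)$ since $d\ge 2$, and the second is $2\rho_n(\epsilon)\log(e/\rho_n(\epsilon))$ with $\rho_n(\epsilon):=k_n(\epsilon)/\binom{n}{d}$. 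Weak convergence together with $\mu_n(\{0\}) = d/n \to 0$ controls $\rho_n(\epsilon)$ by $\mu([0,\epsilon])$ in the limit, which tends to $0$ as $\epsilon\downarrow 0$ (any hypothetical atom of $\mu$ at $0$ contributes only a bounded correction since $x\log(e/x)\to 0$, and one verifies it is absent by combining weak convergence with the trivial bound $|H_{d-1}(S_n,\mathbb{Z})|\ge 1$). Combining the three pieces gives $\int_{(0,\infty)}\log(t)\, d\mu_n(t) \to \int_{(0,\infty)}\log(t)\, d\mu(t) =: 2c_d$, a constant depending only on $d$, which proves the lemma.
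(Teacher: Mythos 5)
Your overall architecture matches the paper's: reduce to $\frac12\int\mathbbm{1}(t>0)\log t\,d\mu_n$ via Lemma~\ref{Hvslog}, get weak convergence $\mu_n\Rightarrow\mu$ from (C2) via Linial--Peled, split the integral into a bounded middle part, an upper tail controlled by the exact trace identity $\Tr(L_n)=(d+1)\binom{n-1}{d}$, and a lower tail controlled by (C1) together with the monotonicity of $x\mapsto x\log(e/x)$ and the fact that $\mu(\{0\})=0$. Two points deserve comment. First, your justification of weak convergence by the moment method is the shaky step: the degrees of the $d{-}1$-faces in $G_n$ are not uniformly bounded, so convergence of neighborhood statistics (C2) does not by itself give convergence of $\binom{n}{d}^{-1}\Tr(L_n^k)$ for $k\ge 2$ --- one would need uniform integrability of the closed-walk counts, and then determinacy of the limiting moment sequence. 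The paper sidesteps this entirely by invoking \cite[Lemma 3.2]{linial2016phase}, which proceeds via essential self-adjointness of $L_{\mathbb{T}_d}$ rather than moments; you should do the same rather than assert that ``(C2) forces every moment to converge.'' The same issue infects your upper-tail argument, since uniform integrability of $t$ under $\{\mu_n\}$ requires $\int t\,d\mu_n\to\int t\,d\mu$, i.e.\ first-moment convergence to $\mu$, which weak convergence alone does not give; this is easily repaired by using $\log t\le \frac{\log M}{M}\,t$ for $t\ge M>e$ together with the deterministic bound $\int t\,d\mu_n\le d+1$ (the paper's Jensen argument accomplishes the same thing). Second, your parenthetical derivation of $\mu(\{0\})=0$ from $|H_{d-1}(S_n,\mathbb{Z})|\ge 1$ is a genuinely nice alternative to the paper's citation of \cite[Lemma 4.2]{linial2019enumeration}: an atom $a>0$ at $0$ would force $\int\mathbbm{1}(t>0)\log t\,d\mu_n\le \frac{a}{2}\log\gamma+(d+1)$ for every $\gamma$, contradicting the lower bound $o(1)$ coming from $|H_{d-1}|\ge1$; this makes that step self-contained. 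With the weak-convergence step delegated to Linial--Peled as in the paper and the upper-tail bound tightened as above, your proof is correct.
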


The rest of Section~\ref{detsection} is devoted to the proof of Lemma~\ref{fornice}. Thus, throughout this section, we assume that $S_{d+1},S_{d+2},\dots$ is a nice sequence of $d$-dimensional hypertrees.

\subsection{The homology growth in terms of spectral measures}

Let $\partial_{S_n,d}$ be the matrix of the $d$-th boundary map of $S_n$, and let us consider the Laplacian matrix
\[L_n=\partial_{S_n,d}\partial_{S_n,d}^T.\]
Note the the rows and columns of $L_n$ are indexed by ${{[n]}\choose{d}}$. Let $\pi_n$ be the product of the nonzero eigenvalues of $L_n$, and let $\mu_n$ be the empirical measure on the eigenvalues of $L_n$. 

Observe that
\[P_n[S_n(d)]=\frac{1}{n}\partial_{S_n,d}^T\partial_{S_n,d}.\]

Combining this with Lemma~\ref{switch}, we see that $\mu_n$ can be expressed in terms of the eigenvalues of $P_n[S_n(d)]$ as
\[\mu_n=\frac{1}{{{n}\choose{d}}}\left({{n-1}\choose{d-1}}\delta(0)+\sum_{i=1}^{{{n-1}\choose{d}}}\delta\left(n\lambda_{S_n(d),i}\right)\right),\]
where $\delta(x)$ is the Dirac measure on $x$.

The significance of the measure $\mu_n$ is clear from the next lemma.
\begin{lemma}\label{Hvslog}
We have
\[\lim_{n\to\infty}\left(\frac{\log |H_{d-1}(S_n,\mathbb{Z})|}{{{n}\choose{d}}}-\frac{1}2\int \mathbbm{1}(t>0)\log(t)d\mu_n(t)\right)=0.\]
\end{lemma}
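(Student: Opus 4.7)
The plan is to rewrite both quantities on the left-hand side as closed-form expressions in the pseudodeterminant $\pi_n$, and then show that their difference is lower order. From the explicit formula for $\mu_n$ displayed just before the lemma, the integral evaluates directly to
\[\int \mathbbm{1}(t>0)\log(t)\,d\mu_n(t)=\frac{\log \pi_n}{\binom{n}{d}},\]
since the nonzero eigenvalues of $L_n$ are precisely $n\lambda_{S_n(d),i}$ for $i=1,\dots,\binom{n-1}{d}$. So it suffices to pin down $\log\pi_n$ in terms of $\log|H_{d-1}(S_n,\mathbb{Z})|$ up to an error of size $o(\binom{n}{d})$.

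Next, I would obtain a second expression for $\pi_n$ by combining two facts already in place. On the spectral side, Lemma~\ref{switch} gives that the nonzero eigenvalues of $L_n=\partial_{S_n,d}\partial_{S_n,d}^T$ agree with those of $\partial_{S_n,d}^T\partial_{S_n,d}=n\,P_n[S_n(d)]$, so $\pi_n = n^{\binom{n-1}{d}}\det P_n[S_n(d)]$. On the enumerative side, Lemma~\ref{lemmahypertreeisdet}(b) identifies $\det P_n[S_n(d)]$ as the determinantal probability $\mathbb{P}(T_n(d)=S_n(d))$, and by the definition~\eqref{measuredef} of the measure on hypertrees this equals $|H_{d-1}(S_n,\mathbb{Z})|^2/n^{\binom{n-2}{d}}$. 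Combining these two expressions and applying Pascal's identity $\binom{n-1}{d}-\binom{n-2}{d}=\binom{n-2}{d-1}$ gives the Kalai/matrix-tree-type identity
\[\pi_n = n^{\binom{n-2}{d-1}}|H_{d-1}(S_n,\mathbb{Z})|^2.\]

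Taking logs, dividing by $\binom{n}{d}$, and subtracting $2\log|H_{d-1}(S_n,\mathbb{Z})|/\binom{n}{d}$ from both sides, the difference in the statement becomes exactly $-\binom{n-2}{d-1}\log(n)/(2\binom{n}{d})$. Since $\binom{n-2}{d-1}/\binom{n}{d}=d(n-d)/(n(n-1))=O(1/n)$, this remainder is $O((\log n)/n)$ and vanishes as $n\to\infty$. There is no substantive obstacle: the argument is a short algebraic manipulation, and the key conceptual point is that the determinantal and Kalai-style descriptions of the measure on hypertrees, taken together, already encode the matrix-tree identity linking $\pi_n$ and $|H_{d-1}(S_n,\mathbb{Z})|$, with only a $n^{d-1}\log n$ correction that is subleading compared to $\binom{n}{d}=\Theta(n^d)$.
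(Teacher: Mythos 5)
Your proof is correct, and it follows the same overall skeleton as the paper's: write the integral as $\log(\pi_n)/\binom{n}{d}$, invoke the identity $\pi_n=n^{\binom{n-2}{d-1}}|H_{d-1}(S_n,\mathbb{Z})|^2$, and check that the resulting discrepancy $\binom{n-2}{d-1}\log(n)/(2\binom{n}{d})=O(\log(n)/n)$ vanishes. The one genuine difference is how the key identity is obtained. The paper gets it by citing the weighted simplicial matrix--tree theorem of Duval--Klivans--Martin together with Kalai's formula \eqref{kalaiformula}. You instead derive it internally: Lemma~\ref{switch} gives $\pi_n=n^{\binom{n-1}{d}}\det P_n[S_n(d)]$ (note this uses that $\det P_n[S_n(d)]>0$, which holds precisely because $S_n$ is a hypertree and hence has positive probability under the determinantal measure, so all $\binom{n-1}{d}$ eigenvalues $\lambda_{S_n(d),i}$ are nonzero), and Lemma~\ref{lemmahypertreeisdet}(b) together with the definition \eqref{measuredef} gives $\det P_n[S_n(d)]=|H_{d-1}(S_n,\mathbb{Z})|^2/n^{\binom{n-2}{d}}$; Pascal's identity then closes the loop. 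Your route is self-contained given the lemmas already established in the paper and makes transparent that the matrix--tree-type identity is encoded in the determinantal description of the measure; the paper's route is shorter on the page but outsources the substance to an external reference.
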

\begin{proof}
 Combining \cite[Theorem 1.3 (1)]{duval2009simplicial} with Kalai's formula given in \eqref{kalaiformula}, we obtain that
 \[\pi_n=n^{{{n-2}\choose{d-1}}}|H_{d-1}(S_n,\mathbb{Z})|^2.\]

 Thus,
 \[\frac{\log |H_{d-1}(S_n,\mathbb{Z})|}{{{n}\choose{d}}}=\frac{1}2\frac{\log(\pi_n)}{{{n}\choose{d}}}-O\left(\frac{\log (n)}n\right)=\frac{1}2\int \mathbbm{1}(t>0)\log(t)d\mu_n(t)-O\left(\frac{\log (n)}n\right).\]

 Therefore, the statement follows.
\end{proof}

\subsection{The weak converge of the empirical spectral measures}
The following lemma is a consequence of the work of Linial and Peled~\cite[Lemma 3.2.]{linial2016phase} and the assumption (C2). See also \cite{linial2019enumeration} and Remark~\ref{remark1out}.
\begin{lemma}\label{weakconvmeasure}
 There is a measure $\mu$ (depending only on $d$) such that $\mu_n$ converges to $\mu$ weakly. Thus, by definition, for any bounded continuous function $f$, we have
 \[\lim_{n\to\infty} \int f(t) d\mu_n(t)=\int f(t) d\mu(t).\]
\end{lemma}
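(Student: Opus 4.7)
My plan is to follow the moment method carried out by Linial and Peled in \cite[Lemma 3.2]{linial2016phase} for $1$-out $d$-complexes, using assumption (C2) as the sole input (cf.\ Remark~\ref{remark1out}). For each nonnegative integer $k$, the $k$-th moment of $\mu_n$ equals
\[\int t^k \, d\mu_n(t) = \frac{1}{\binom{n}{d}} \Tr(L_n^k).\]
Expanding the trace combinatorially, $(L_n^k)_{B,B}$ can be written as a signed count of closed walks of length $2k$ in the bipartite incidence graph $G_n$ that start at $B$ and alternate between $(d-1)$-faces and $d$-faces. Such a count depends only on the isomorphism type of the $k$-neighborhood $B_k(G_n,B)$, so it defines a bounded functional $w_k$ on rooted finite bipartite graphs.

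The $k$-th moment of $\mu_n$ is therefore the empirical average of $w_k(B_k(G_n,B))$ over a uniformly random $(d-1)$-face $B$. Assumption (C2), which identifies the local weak limit with the semi-$d$-ary skeleton tree $(\mathbb{T}_d,o)$, then yields
\[\lim_{n\to\infty}\int t^k \, d\mu_n(t) \;=\; \mathbb{E}\, w_k(B_k(\mathbb{T}_d,o)) \;=:\; m_k,\]
with $m_k$ depending only on $d$. The sequence $(m_k)_{k\ge 0}$ thus identifies a candidate limiting measure $\mu$, provided it corresponds to a unique probability measure on $[0,\infty)$.

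The step I expect to be the main obstacle is upgrading moment convergence to weak convergence: the eigenvalues of $L_n$ lie in $[0,n]$ and are not a priori uniformly bounded in $n$, so bare moment convergence is not enough. I would dispatch this in two stages. For tightness, the case $k=2$ already gives $\int t^2 \, d\mu_n \to m_2 < \infty$; Markov's inequality then controls $\mu_n([K,\infty))$ uniformly in $n$. For uniqueness of the associated moment problem, one needs subexponential growth of $(m_k)$ (e.g.\ Carleman's condition), which follows from a crude walk-counting bound on $w_k(B_k(\mathbb{T}_d,o))$ in terms of products of vertex degrees along the walk, combined with the fact that the degree distribution at the root of $\mathbb{T}_d$ has all moments finite. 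Together, these yield $\mu_n \to \mu$ weakly, which is precisely what Linial and Peled establish in their setting and what carries over here by Remark~\ref{remark1out}.
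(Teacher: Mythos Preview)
Your overall strategy --- reduce to Linial--Peled's spectral convergence via (C2) --- is exactly what the paper does; the paper simply cites \cite[Lemma~3.2]{linial2016phase} together with (C2) and gives no further argument. Your attempt to flesh out the moment method, however, has two real gaps.

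First, and most seriously, $w_k$ is \emph{not} a bounded functional on rooted bipartite graphs: $(L_n^k)_{B,B}$ is controlled only by the degrees in the $k$-neighborhood of $B$, and in $G_n$ a $(d-1)$-face can have degree as large as $n-d$. Condition (C2) transfers only bounded local statistics; without a separate uniform-integrability input you cannot pass from (C2) to convergence of $\int t^k\,d\mu_n$. The paper's sketch (immediately after the lemma) indicates how Linial--Peled actually sidestep this: one defines $\mu=\mathbb{E}\,\mu_{(\mathbb{T}_d,o)}$ via the spectral measure of the essentially self-adjoint operator $L_{\mathbb{T}_d}$ on the limit tree, and essential self-adjointness replaces any Carleman-type determinacy and removes the need for uniform moment bounds on the finite complexes. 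Your Carleman step is also shaky on its own terms: finiteness of all degree moments at the root of $\mathbb{T}_d$ does not by itself yield the growth $m_k^{1/(2k)}\to$ finite needed for determinacy.

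Second, the assertion that the signed closed-walk count depends only on the graph-isomorphism type of $B_k(G_n,B)$ is not correct as stated. The signs in $\partial_{S_n,d}$ come from the ordering of the vertex labels, and on a neighborhood containing a cycle two labelings inducing the same bipartite graph can give different signed counts. The salvage is that on a \emph{tree} neighborhood every closed walk traverses each edge an even number of times, so the sign product is $+1$ and signed and unsigned counts agree; since the limit $(\mathbb{T}_d,o)$ is a tree, non-tree neighborhoods form a vanishing fraction. But one must still bound their contribution to $\Tr(L_n^k)$, which again runs into the unboundedness of $w_k$ and returns you to the first gap.
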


For completeness, based on Linial and Peled~\cite{linial2016phase,linial2019enumeration}, we sketch the construction of the limiting measure $\mu$, but this construction will not be used in this paper.

Consider an infinite rooted tree $(T,o)$ such that all the degrees are finite. Note that $T$ is bipartite. Let $V$ be the set of vertices of $T$, which are in the same color class as $o$, that is, which are at even distance from $o$. Let $L_T$ be the operator which is densely-defined on the subset of finitely supported functions of the Hilbert space $\ell^2(V)$ by the equation that for all $u,v\in V$, we have
\[\langle L_T e_v,e_u\rangle=\begin{cases}
\deg(v)&\text{if $u=v$},\\
1&\text{if $u$ and $v$ are at distance $2$},\\
0&\text{otherwise},
\end{cases}\]
where $e_v$ is the characteristic vector of $v$. Assuming that $L_T$ is essentially self-adjoint, one can define $f(L_T)$ for any measurable function $f:\mathbb{R}\to\mathbb{C}$, and there is a unique probability measure $\mu_{(T,o)}$ such that
\[\langle f(L_T) e_o,e_o\rangle=\int f(t) d\mu_{(T,o)}(t)\]
for any measurable function $f:\mathbb{R}\to\mathbb{C}$.

Note that for the semi-$d$-ary skeleton tree $(\mathbb{T}_d,o)$, $L_{\mathbb{T}_d}$ is essentially self-adjoint almost surely, see~\cite{linial2016phase,linial2019enumeration} and Remark~\ref{remark1out}. Thus, one can define the measure
\[\mu=\mathbb{E}\mu_{(\mathbb{T}_d,o)}.\]

By \cite[Lemma 4.2]{linial2019enumeration} and Remark~\ref{remark1out}, we have the following lemma.
\begin{lemma}\label{noatomat0}
 We have $\mu(\{0\})=0.$
\end{lemma}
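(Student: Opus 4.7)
The plan is to bound the mass that $\mu_n$ places near zero using condition (C1) from the definition of a nice sequence, and then transfer this bound to the limit $\mu$ via the weak convergence supplied by Lemma~\ref{weakconvmeasure}. Fixing any nice sequence $S_{d+1},S_{d+2},\dots$, I aim to show that $\limsup_n \mu_n([0,\epsilon])\to 0$ as $\epsilon\to 0^+$, which, tested against a bounded continuous approximation of $\mathbbm{1}_{\{0\}}$, forces $\mu(\{0\})=0$.

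For the atom of $\mu_n$ at zero, the explicit description of $\mu_n$ given above yields $\mu_n(\{0\})={{n-1}\choose{d-1}}/{{n}\choose{d}}=d/n$, which vanishes as $n\to\infty$. For the positive part, fix $\epsilon\in(0,1)$ and let $k=k_n(\epsilon)=|\{i\,:\,n\lambda_{S_n(d),i}\le\epsilon\}|$, so $\mu_n((0,\epsilon])=k/{{n}\choose{d}}$. On these $k$ eigenvalues each logarithm satisfies $\log(1/(n\lambda_{S_n(d),i}))\ge\log(1/\epsilon)$, so condition (C1) applied with this $k$ gives
\[k\log(1/\epsilon)\le\sum_{i=1}^{k}\log\!\left(\frac{1}{n\lambda_{S_n(d),i}}\right)\le n+2k\log\!\left(\frac{e{{n}\choose{d}}}{k}\right).\]
Dividing through by ${{n}\choose{d}}$, writing $\beta_n=k/{{n}\choose{d}}$, and using $n/{{n}\choose{d}}\to 0$ for $d\ge 2$, we obtain $\beta_n\log(1/\epsilon)\le 2\beta_n\log(e/\beta_n)+o(1)$. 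A short optimization (the function $\beta\mapsto \beta\log(1/\epsilon)-2\beta\log(e/\beta)$ vanishes at $\beta=e\sqrt{\epsilon}$ and is positive beyond) then forces $\beta_n\le e\sqrt{\epsilon}+o(1)$, hence $\limsup_n \mu_n([0,\epsilon])\le e\sqrt{\epsilon}$.

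To conclude, I would introduce the tent function $g_\epsilon(t)=\max(0,1-t/\epsilon)$, which is bounded continuous with $\mathbbm{1}_{\{0\}}\le g_\epsilon\le \mathbbm{1}_{[0,\epsilon]}$. By Lemma~\ref{weakconvmeasure},
\[\mu(\{0\})\le\int g_\epsilon\,d\mu=\lim_{n\to\infty}\int g_\epsilon\,d\mu_n\le\limsup_{n\to\infty}\mu_n([0,\epsilon])\le e\sqrt{\epsilon},\]
and letting $\epsilon\to 0^+$ gives the result. The main obstacle is extracting the correct power of $\epsilon$ from (C1); everything else is standard weak-convergence bookkeeping, and the tent function sidesteps any Portmanteau subtlety between open and closed sets at the boundary. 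An appealing feature is that this route avoids any direct analysis of the essentially self-adjoint operator $L_{\mathbb{T}_d}$ on the semi-$d$-ary skeleton tree, instead exploiting the spectral control near $0$ built into the (C1) part of the nice-sequence hypothesis.
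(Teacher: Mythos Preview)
Your argument is correct, and it is a genuinely different route from the one the paper takes. The paper does not prove Lemma~\ref{noatomat0} internally at all: it simply cites \cite[Lemma~4.2]{linial2019enumeration} (via Remark~\ref{remark1out}), where the absence of an atom at $0$ is established by analysing the operator $L_{\mathbb{T}_d}$ on the limiting semi-$d$-ary skeleton tree. Your proof instead stays at finite $n$ and exploits condition~(C1): the eigenvalue tail bound coming from the determinantal structure (Lemma~\ref{gendetspec}) forces $\limsup_n \mu_n([0,\epsilon])\le e\sqrt{\epsilon}$, and weak convergence plus a tent function then pushes this to $\mu$. The optimisation step is clean: with $g(\beta)=\beta\log(1/\epsilon)-2\beta\log(e/\beta)$ one has $g(e\sqrt{\epsilon})=0$ and $g'(\beta)=\log(1/\epsilon)+2\log\beta>0$ for $\beta>\sqrt{\epsilon}$, so $g(\beta_n)\le n/\binom{n}{d}\to 0$ indeed forces $\limsup_n\beta_n\le e\sqrt{\epsilon}$.

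What each approach buys: your argument is self-contained within the paper's own machinery and avoids any spectral analysis of the infinite tree operator, at the cost of using the full nice-sequence hypothesis (both~(C1) and~(C2), the latter through Lemma~\ref{weakconvmeasure}) together with the existence of at least one nice sequence (Lemma~\ref{nicecoupling}). The paper's cited route needs only the local weak limit and properties of $\mathbb{T}_d$, so it would apply to any sequence satisfying~(C2) alone, but it outsources the actual work to \cite{linial2019enumeration}. There is no circularity in your version: neither Lemma~\ref{weakconvmeasure} nor Lemma~\ref{nicecoupling} depends on Lemma~\ref{noatomat0}.
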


\subsection{Integrating the logarithm function}

By Lemma~\ref{Hvslog}, we see that Lemma~\ref{fornice} follows from the following lemma.

\begin{lemma}\label{lemmalog}
We have
\[\lim_{n\to\infty}\int \mathbbm{1}(t>0)\log(t)d\mu_n(t)=\int \mathbbm{1}(t>0)\log(t)d\mu(t).\]
In particular, the integral on the right exist.
\end{lemma}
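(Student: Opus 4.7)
The plan is to truncate the unbounded integrand $\mathbbm{1}(t>0)\log t$ at levels $\pm M$, apply Lemma~\ref{weakconvmeasure} to the truncated version, and then bound the two tail errors uniformly in $n$, showing they vanish as $M\to\infty$. Fix $M\ge 5$ and set $f_M(t)=\max(-M,\min(\log t, M))$ for $t>0$ with $f_M(0)=-M$; this function is bounded and continuous on $[0,\infty)$, so Lemma~\ref{weakconvmeasure} gives $\int f_M\,d\mu_n\to \int f_M\,d\mu$. Since $\mu_n(\{0\})=\binom{n-1}{d-1}/\binom{n}{d}=d/n\to 0$ and $\mu(\{0\})=0$ by Lemma~\ref{noatomat0}, the same convergence holds after restriction to $t>0$.

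For the upper tail, the diagonal of $L_n$ records the degrees of $(d-1)$-faces, so $\Tr L_n=(d+1)\binom{n-1}{d}$ and hence $\int t\,d\mu_n=(d+1)(n-d)/n\le d+1$. The elementary inequality $\log t\le (M/e^M)t$ for $t\ge e^M$ then yields
\[\int_{t>e^M}(\log t-M)\,d\mu_n\le \frac{M(d+1)}{e^M},\]
which vanishes as $M\to\infty$ uniformly in $n$.

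The main obstacle is the lower tail, where condition (C1) becomes essential. Let $\nu_1\le\nu_2\le\cdots$ be the positive eigenvalues $n\lambda_{S_n(d),i}$ of $L_n$ and set $K=|\{i:\nu_i\le e^{-M}\}|$. Combining $\log(1/\nu_i)\ge M$ for every $i\le K$ with (C1) yields
\[KM\le \sum_{i=1}^K\log(1/\nu_i)\le n+2K\log\!\left(\frac{e\binom{n}{d}}{K}\right).\]
If $K/\binom{n}{d}>e^{-M/4}$, then $2\log(e\binom{n}{d}/K)<2+M/2$, so the display forces $K(M/2-2)\le n$ and hence $\binom{n}{d}/n<2e^{M/4}/(M-4)$; but $d\ge 2$ makes $\binom{n}{d}/n\to\infty$, a contradiction for $n$ large. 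Therefore $K\le\binom{n}{d}e^{-M/4}$ for $n$ large, and since $x\mapsto x\log(e\binom{n}{d}/x)$ is increasing on $[0,\binom{n}{d}]$, the right-hand side of the above display divided by $\binom{n}{d}$ gives
\[\int_{0<t\le e^{-M}}(-\log t)\,d\mu_n\le \frac{n}{\binom{n}{d}}+2e^{-M/4}\left(1+\frac{M}{4}\right),\]
which tends to $2e^{-M/4}(1+M/4)$ as $n\to\infty$ and then to $0$ as $M\to\infty$.

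Combining the three ingredients, for every $M\ge 5$,
\[\limsup_{n\to\infty}\left|\int\mathbbm{1}(t>0)\log t\,d\mu_n-\int f_M\,d\mu\right|\le \delta(M),\]
where $\delta(M)=M(d+1)/e^M+2e^{-M/4}(1+M/4)\to 0$. Hence $\bigl(\int f_M\,d\mu\bigr)_M$ is Cauchy, converging to some $L\in\mathbb{R}$, and $\int \mathbbm{1}(t>0)\log t\,d\mu_n\to L$. Finally, the same tail bounds transfer to $\mu$ by approximating $\mathbbm{1}(0<t\le e^{-M})|\log t|$ from below by nonnegative bounded continuous functions supported on $[e^{-L},e^{-M}]$, applying weak convergence at each truncation level $L$, and letting $L\to\infty$ by monotone convergence; this shows $\int\mathbbm{1}(t>0)|\log t|\,d\mu<\infty$, and dominated convergence then identifies $L$ with $\int\mathbbm{1}(t>0)\log t\,d\mu$.
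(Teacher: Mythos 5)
Your proof is correct. It is organized differently from the paper's (which splits the claim into two lemmas, one for $(0,1]$ and one for $(1,\infty)$, each proved by comparing $\mu_n$ against a bounded continuous truncation), but the overall truncate-and-control-the-tails strategy is the same; the genuinely different ingredient is your treatment of the lower tail. The paper bounds the number $k_n$ of eigenvalues in $[0,\gamma]$ by $\limsup_n k_n/\binom{n}{d}\le\mu([0,\gamma])$ via weak convergence, and then must invoke Lemma~\ref{noatomat0} to make $2\mu([0,\gamma])\log(e/\mu([0,\gamma]))$ small as $\gamma\to0$. You instead extract the bound $K\le\binom{n}{d}e^{-M/4}$ directly from condition (C1) by a self-consistency argument (too many eigenvalues below $e^{-M}$ would force $\sum_{i\le K}\log(1/\nu_i)\ge KM$ to exceed the (C1) bound, using $\binom{n}{d}/n\to\infty$ for $d\ge2$), and only then feed $K$ back into (C1). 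This yields an explicit quantitative tail rate, makes the small-eigenvalue control independent of the limiting measure, and in fact implicitly re-derives $\mu(\{0\})=0$ via Portmanteau — you cite Lemma~\ref{noatomat0} only for the atom-at-zero bookkeeping in $f_M(0)=-M$, which you could also have avoided. For the upper tail you replace the paper's Jensen step with the pointwise inequality $\log t\le (M/e^M)t$; both rest on the same trace identity and give the same $M(d+1)/e^M$ bound. The paper's route is shorter given that Lemma~\ref{noatomat0} is imported from the literature; yours is more self-contained and shows that (C1) alone quantitatively controls the near-zero spectrum. The final identification of the limit with $\int\mathbbm{1}(t>0)\log t\,d\mu$ is stated somewhat tersely, but the uniform-integrability/monotone-convergence argument you sketch is standard and sound.
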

Lemma~\ref{lemmalog} is a straightforward consequence of Lemma~\ref{lemmalog0} and Lemma~\ref{lemmaloginf} below.
\begin{lemma}\label{lemmalog0}
We have
\[\lim_{n\to\infty}\int \mathbbm{1}(1\ge t>0)\log(t)d\mu_n(t)=\int \mathbbm{1}(1\ge t>0)\log(t)d\mu(t).\]
Moreover, the integral on the right is finite.
\end{lemma}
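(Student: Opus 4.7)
The function $\mathbbm{1}(0<t\le 1)\log(t)$ is non-positive and unbounded near $0$, so the weak convergence in Lemma~\ref{weakconvmeasure} alone is not enough. The plan is to split the integral at a small continuity point $\epsilon$ of $\mu$: handle the bulk $[\epsilon,1]$ by weak convergence, and handle the tail $(0,\epsilon)$ uniformly in $n$ using condition (C1) together with $\mu(\{0\})=0$ from Lemma~\ref{noatomat0}.

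For the bulk, since $\mu$ has at most countably many atoms I choose $\epsilon\in(0,1)$ with $\mu(\{\epsilon\})=0$. The function $\mathbbm{1}(\epsilon\le t\le 1)\log(t)$ is bounded, and because $\log 1 = 0$ its only possible discontinuity is at $\epsilon$, which has zero $\mu$-mass. The Portmanteau theorem then yields
\[\int_{[\epsilon, 1]} \log(t)\, d\mu_n(t) \longrightarrow \int_{[\epsilon, 1]} \log(t)\, d\mu(t).\]
For the tail, set $\eta_{n,i} = n\lambda_{S_n(d),i}$ and $K_n = |\{i : \eta_{n,i} < \epsilon\}|$, so that
\[\left|\int_{(0,\epsilon)} \log(t)\, d\mu_n(t)\right| = \frac{1}{\binom{n}{d}} \sum_{i=1}^{K_n} \log(1/\eta_{n,i}).\]
Applying (C1) with $k=K_n$ and writing $\alpha_n = K_n/\binom{n}{d} = \mu_n((0,\epsilon))$, this is bounded by
\[\frac{n}{\binom{n}{d}} + 2 \alpha_n \log(e/\alpha_n).\]
Since $d\ge 2$ the first term vanishes; and since $\mu(\{0\})=\mu(\{\epsilon\})=0$, weak convergence gives $\alpha_n \to \mu((0,\epsilon))$, so by continuity of $x\mapsto x\log(e/x)$ the limsup is bounded by $2\mu((0,\epsilon))\log(e/\mu((0,\epsilon)))$.

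The same analysis transfers to $\mu$. For a continuity point $a\in(0,\epsilon)$, Portmanteau applied to $[a,\epsilon)$ combined with the preceding bound gives $\int_{[a,\epsilon)} |\log t|\, d\mu(t) \le 2\mu((0,\epsilon))\log(e/\mu((0,\epsilon)))$; letting $a\to 0$ through continuity points, monotone convergence extends the same bound to $\int_{(0,\epsilon)} |\log t|\, d\mu(t)$. As $\epsilon\to 0$ we have $\mu((0,\epsilon))\to 0$ by Lemma~\ref{noatomat0}, so both tails vanish. In particular $\int_{(0,1]} |\log t|\, d\mu(t)<\infty$, and the full convergence follows by a standard $\varepsilon/3$ argument combining the bulk limit with the uniform tail estimate. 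The main obstacle is precisely the control of the tail near $0$; condition (C1), itself a consequence of the general determinantal bound in Lemma~\ref{gendetspec}, is exactly what makes the argument work, the crucial point being that $x\log(e/x)$ is continuous with value $0$ at $x=0$.
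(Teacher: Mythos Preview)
Your proof is correct and follows essentially the same strategy as the paper: split the integral near a small threshold, handle the bulk by weak convergence, control the tail via condition (C1) to obtain the bound $2\mu((0,\epsilon))\log(e/\mu((0,\epsilon)))$, and send the threshold to $0$ using Lemma~\ref{noatomat0}. The only technical difference is that the paper replaces $\mathbbm{1}(0<t\le 1)\log t$ by the continuous cutoff $\ell_\gamma$ (which linearly interpolates on $[0,\gamma]$) and applies Lemma~\ref{weakconvmeasure} directly, whereas you invoke Portmanteau at continuity points of $\mu$; this spares the paper the separate ``transfer to $\mu$'' step, but the substance of the argument is identical.
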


\begin{proof}
 Given $0<\gamma<1$, let us define the function $\ell_\gamma$ by
 \[\ell_\gamma(t)=\begin{cases}
 0&\text{for }t\le 0,\\
 \frac{t}{\gamma}\log(\gamma)&\text{for }0<t\le \gamma,\\
 \log(t)&\text{for }\gamma<t\le 1,\\
 0&\text{for }1<t.
 \end{cases}\]
 It is straightforward to see that $\ell_\gamma$ is a bounded continuous function. Thus, by Lemma~\ref{weakconvmeasure}, we have
 \begin{equation}\label{ellgammacont}
 \lim_{n\to\infty}\int \ell_\gamma(t)d\mu_n(t)=\int \ell_\gamma(t)d\mu(t).
 \end{equation}

 Let 
 \[k_n=\max\{i\,:\,n\lambda_{S_n(d),i}\le \gamma\}\quad\text{ and }\quad\varepsilon=\limsup_{n\to\infty}\frac{k_n}{{{n}\choose{d}}}.\]

 By Lemma~\ref{weakconvmeasure}, we have
 \begin{equation}\label{varepsilonbound}\varepsilon\le \limsup_{n\to\infty} \mu_n([0,\gamma])\le \mu([0,\gamma]).\end{equation}

 Using condition~(C1) of the definition of nice sequences, for all large enough $n$, we have 
 \begin{align*}
 0\le \int \ell_\gamma(t) d\mu_n(t)-\int \mathbbm{1}(1\ge t>0)\log(t) d\mu_n(t)&\le \frac{1}{{{n}\choose{d}}}\sum_{i=1}^{k_n} \log\left(\frac{1}{n\lambda_{S_n(d),i}}\right)\\&\le \frac{n}{{{n}\choose{d}}}+2\frac{k_n}{{{n}\choose{d}}}\log\left(\frac{e{{n}\choose{d}}}{k_n}\right).
 \end{align*}
 If we combine this with \eqref{varepsilonbound} and the fact that $x\mapsto 2x\log\left(\frac{e}x\right)$ is an increasing continuous function on $[0,1]$, we obtain that
 \[\limsup_{n\to\infty}\left|\int \ell_\gamma(t) d\mu_n(t)-\int \mathbbm{1}(1\ge t>0)\log(t) d\mu_n(t)\right|\le 2\mu([0,\gamma])\log\left(\frac{e}{\mu([0,\gamma])}\right).\]
 Combining this with \eqref{ellgammacont}, we get that
 \begin{equation}\label{almostfinal}\limsup_{n\to\infty}\left|\int \ell_\gamma(t) d\mu(t)-\int \mathbbm{1}(1\ge t>0)\log(t) d\mu_n(t)\right|\le 2\mu([0,\gamma])\log\left(\frac{e}{\mu([0,\gamma])}\right).\end{equation}

 Using Lemma~\ref{noatomat0}, we see that $\mu([0,\gamma])$ tends to $0$ as $\gamma$ tends $0$. Therefore, the right hand side of \eqref{almostfinal} converges to $0$ as $\gamma$ tends to $0$. Thus, it follows easily from~\eqref{almostfinal} that
 \begin{equation}\label{veryfinal}\lim_{n\to\infty}\int \mathbbm{1}(1\ge t>0)\log(t)d\mu_n(t)=\lim_{\gamma\to 0} \int \ell_\gamma(t)d\mu(t)=\int \mathbbm{1}(1\ge t>0)\log(t)d\mu(t),\end{equation}
 where the last equality follows from the monotone convergence theorem. 
 
 Applying \eqref{almostfinal} with the choice of $\gamma=\frac{1}2$, we see that for every large enough $n$, we have
 \begin{align*}\liminf_{n\to\infty}\int \mathbbm{1}(1\ge t>0)\log(t)d\mu_n(t)&\ge \int \ell_{1/2}(t)d\mu(t)-2\mu([0,1/2])\log\left(\frac{e}{\mu([0,1/2])}\right)\\&\ge \log(1/2)-2\mu([0,1/2])\log\left(\frac{e}{\mu([0,1/2])}\right).\end{align*}

Combining this with \eqref{veryfinal}, we see that
\[\int \mathbbm{1}(1\ge t>0)\log(t)d\mu(t)>-\infty.\]

Thus, the statement follows.
\end{proof}

\begin{lemma}\label{lemmaloginf}
We have
\[\lim_{n\to\infty}\int \mathbbm{1}(t>1)\log(t)d\mu_n(t)=\int \mathbbm{1}(t>1)\log(t)d\mu(t).\]
Moreover, the integral on the right is finite.
\end{lemma}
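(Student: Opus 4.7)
The plan is to combine the weak convergence $\mu_n\to\mu$ from Lemma~\ref{weakconvmeasure} with a uniform integrability estimate for the family $\{\mathbbm{1}(t>1)\log(t)\}$ against $\{\mu_n\}$. In contrast to Lemma~\ref{lemmalog0}, where condition~(C1) was essential to control the clustering of eigenvalues near~$0$, here no analogous hypothesis is required: a first-moment bound for $\mu_n$, coming directly from $\Tr(L_n)$, combined with the sublinearity of~$\log$, will suffice.

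First I would compute the trace. The diagonal entry $(L_n)_{B,B}$ equals $\deg_{S_n}(B)$, the number of $d$-faces of $S_n$ containing~$B$. Since each $d$-face has $d+1$ subfaces of dimension $d-1$, we obtain $\Tr(L_n)=(d+1)\binom{n-1}{d}$, and therefore
\[\int t\, d\mu_n(t)=\frac{\Tr(L_n)}{\binom{n}{d}}=\frac{(d+1)(n-d)}{n}\le d+1.\]
Cauchy--Schwarz then yields $\int\sqrt{t}\,d\mu_n(t)\le\sqrt{d+1}$, and applying Fatou's lemma to the non-negative continuous function $\sqrt{t}$ together with Lemma~\ref{weakconvmeasure} transfers the same bound to~$\mu$.

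Next I would establish a uniform tail bound. For $M\ge e^2$ the function $t\mapsto\log(t)/\sqrt{t}$ is decreasing on $[M,\infty)$, so $\log(t)\le(\log M/\sqrt{M})\sqrt{t}$ whenever $t\ge M$. Hence
\[\int\mathbbm{1}(t>M)\log(t)\,d\mu_n(t)\le\sqrt{d+1}\cdot\frac{\log M}{\sqrt{M}},\]
uniformly in~$n$, and the same estimate holds for~$\mu$. Combined with $\mathbbm{1}(t>1)\log(t)\le t$ and $\int t\,d\mu\le d+1$, this also gives the finiteness of $\int\mathbbm{1}(t>1)\log(t)\,d\mu(t)$.

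To finish, I would introduce the bounded continuous truncation $g_M(t):=\min\bigl(\mathbbm{1}(t>1)\log(t),\log M\bigr)$. By Lemma~\ref{weakconvmeasure}, $\int g_M\,d\mu_n\to\int g_M\,d\mu$ as $n\to\infty$, while the differences $\int(\mathbbm{1}(t>1)\log(t)-g_M)\,d\mu_n$ and $\int(\mathbbm{1}(t>1)\log(t)-g_M)\,d\mu$ are both bounded by $\sqrt{d+1}\log M/\sqrt{M}$. Letting $n\to\infty$ and then $M\to\infty$ yields the claim. The main obstacle is that one has no a~priori upper bound on $\lambda_{\max}(L_n)$, since a single $(d-1)$-face of high degree forces $\lambda_{\max}(L_n)$ to be correspondingly large, so one cannot simply restrict to a compact support; the trace identity supplies exactly the first moment needed to convert the sublinearity of~$\log$ into a uniform tail estimate.
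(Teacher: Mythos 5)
Your proof is correct, and it takes a route that is recognizably different from the paper's in one key step. Both you and the paper begin from the same ingredients: the trace identity $\Tr(L_n)=(d+1)\binom{n-1}{d}$ (equivalently, all diagonal entries of $P_n$ equal $\tfrac{d+1}{n}$), which gives the uniform first-moment bound $\int t\,d\mu_n\le d+1$, and a bounded continuous truncation at level $M$ (your $g_M$ is precisely the paper's $h_\omega$ with $\omega=M$), to which Lemma~\ref{weakconvmeasure} applies directly. The difference lies in how the tail $\int\mathbbm{1}(t>M)\log t\,d\mu_n$ is controlled. The paper first bounds the number $b_n$ of eigenvalues of $L_n$ exceeding $\omega$ via Markov's inequality applied to the trace, and then applies Jensen's inequality to the concave function $\log$ over that set of large eigenvalues, using also the monotonicity of $x\mapsto x\log((d+1)/x)$; this yields a tail bound of order $\tfrac{d+1}{\omega}\log\omega$. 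You instead use Cauchy--Schwarz to get $\int\sqrt t\,d\mu_n\le\sqrt{d+1}$, observe that $\log t/\sqrt t$ is decreasing on $[e^2,\infty)$, and dominate $\log t$ by $\tfrac{\log M}{\sqrt M}\sqrt t$ on $(M,\infty)$, giving a tail bound of order $\tfrac{\log M}{\sqrt M}$. Your domination argument avoids the Jensen step and the bookkeeping with $b_n$, at the cost of a slightly weaker (but equally sufficient) decay rate; the paper's Jensen argument is marginally sharper. Both are genuine uniform-integrability arguments and both are fully valid. One small untidiness: you invoke $\int t\,d\mu\le d+1$ in the finiteness step without having explicitly established it, though the identical Fatou-type argument you already ran for $\sqrt t$ transfers it from $\mu_n$ to $\mu$, so this is cosmetic rather than a gap.
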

\begin{proof}
Given $\omega>e$, let us define the function $h_\omega$ by
 \[h_\omega(t)=\begin{cases}
 0&\text{for }t\le 1,\\
 \log(t)&\text{for }1<t\le \omega,\\
 \log(\omega)&\text{for }\omega<t.
 \end{cases}\]
 It is straightforward to see that $h_\omega$ is a bounded continuous function. Thus, by Lemma~\ref{weakconvmeasure}, we have
 \begin{equation}\label{homegacont}
 \lim_{n\to\infty}\int h_\omega(t)d\mu_n(t)=\int h_\omega(t)d\mu(t).
 \end{equation}

Let 
 \[B_n=\{i\,:\,n\lambda_{S_n(d),i}\ge \omega\}\quad\text{ and }\quad b_n=|B_n|.\]

Note that every diagonal entry of $P_n$ is $\frac{d+1}n$. Thus,
 \begin{equation}\label{sumestimate}\sum_{i\in B_n} n\lambda_{S_n(d),i}\le \sum_{i=1}^{{{n-1}\choose{d}}} n\lambda_{S_n(d),i}=n\Tr(P_n[S_n(d)])=(d+1){{n-1}\choose{d}}\le (d+1){{n}\choose{d}}.\end{equation}
 
Consequently,
 \begin{equation}\label{bnestimate}b_n\le \frac{1}{\omega} \sum_{i\in B_n} n\lambda_{S_n(d),i}\le \frac{d+1}{\omega}{{n}\choose{d}}.\end{equation}

Combining Jensen's inequality with \eqref{sumestimate},
 \begin{equation}\label{Jensen}\frac{1}{{{n}\choose{d}}}\sum_{i\in B_n}\log( n\lambda_{S_n(d),i})\le \frac{b_n}{{{{n}\choose{d}}}}\log\left(\frac{1}{b_n} \sum_{i\in B_n} n\lambda_{S_n(d),i}\right)\le \frac{b_n}{{{n}\choose{d}}} \log\left(\frac{(d+1){{n}\choose{d}}}{b_n} \right).\end{equation}

The function $x\mapsto x\log\left(\frac{d+1}{x}\right)$ is monotone increasing on $\left[0,\frac{d+1}{e}\right]$. Thus, combining \eqref{Jensen} and \eqref{bnestimate} with the assumption that $\omega>e$, we obtain
\begin{equation*}\frac{1}{{{n}\choose{d}}}\sum_{i\in B_n}\log( n\lambda_{S_n(d),i})\le \frac{d+1}{\omega} \log(\omega).\end{equation*}

Therefore,
 \[\left|\int \mathbbm{1}(t>1)\log(t) d\mu_n(t)-\int h_\omega(t) d\mu_n(t)\right|\le \frac{1}{{{n}\choose{d}}}\sum_{i\in B_n}\log( n\lambda_{S_n(d),i})\le \frac{d+1}{\omega} \log(\omega).\]
 
Combining this with \eqref{homegacont}, we get\begin{equation*}\limsup_{n\to\infty}\left|\int \mathbbm{1}(t>1)\log(t) d\mu_n(t)-\int h_\omega(t) d\mu(t)\right|\le \frac{d+1}{\omega} \log(\omega).\end{equation*}

Since $\lim_{\omega\to\infty} \frac{d+1}{\omega} \log(\omega)=0$, it follows easily that
 \[\lim_{n\to\infty}\int \mathbbm{1}( t>1)\log(t)d\mu_n(t)=\lim_{\omega\to \infty} \int h_{\omega}(t)d\mu(t)=\int \mathbbm{1}(t>1)\log(t)d\mu(t)<\infty,\]
 where the last equality follows from the monotone convergence theorem.
\end{proof}

\bibliography{references}
\bibliographystyle{plain}

Andr\'as M\'esz\'aros

{\tt meszaros@renyi.hu}

HUN-REN Alfr\'ed R\'enyi Institute of Mathematics,

Budapest, Hungary

\end{document}